\documentclass[11pt,a4paper]{scrartcl}

\usepackage{mystyle-koma-enhanced-xetex-pdflatex}
\usepackage{comment}
\usepackage{tikz}
\usetikzlibrary{arrows.meta,positioning}
\usepackage{longtable}
\usepackage{booktabs}

\newcommand{\smat}[1]{\mbox{\scriptsize$\setlength{\arraycolsep}{2pt}\begin{pmatrix}#1\end{pmatrix}$}}

\RequirePackage{tikz}
\usetikzlibrary{backgrounds,patterns,positioning,shapes.geometric}

\MSC{05C20, 05C80, 60C05}
\Keywords{Sidorenko's inequality, directed graphs, homomorphism densities, graph limits}

\newcommand{\N}{\mathbb{N}}

\renewcommand{\P}{\mathbb{P}}

\newcommand{\E}{\mathbb{E}}
\newcommand{\0}{\mathbf{o}}
\newcommand{\R}{\mathbb{R}}

\newcommand{\ind}{\mathbb{1}}
\newcommand{\emb}{\mathup{emb}}

\newcommand{\din}{\operatorname{deg}_{\mathsf{in}}}
\newcommand{\dout}{\operatorname{deg}_{\mathsf{out}}}

\title{Maximising homomorphism counts between digraphs}
\author{
Lukas L\"{u}chtrath \orcidlink{0000-0003-4969-806X}\thanks{Weierstrass Institute for Applied Analysis and Stochastics, Anton-Wilhelm-Amo-Str.\ 39, 10117 Berlin, Germany} \\ lukas.luechtrath@wias-berlin.de \\
\and
Christian M\"{o}nch \orcidlink{0000-0002-6531-6482}%
\\ cmoench25@gmail.com
}
\date{March 19, 2026}

\begin{document}

\maketitle

\begin{abstract}
\noindent We prove a Sidorenko-type inequality for directed trees: for every oriented tree $T$ on $k$
vertices and every finite directed graph $G$, the homomorphism count $\hom⁡(T,G)$ is bounded above by the maximum of the two pure star counts $\hom(S_{0,k-1},G)$
and $\hom(S_{k-1,0},G)$. In other words, among all directed trees on $k$ vertices, the pure in- and out-stars maximise the homomorphism count into host digraphs. The proof is purely combinatorial, based on an iterative leaf-reallocation scheme combined with Hölder's inequality. We further investigate the corresponding homomorphism order on directed trees, discuss refinements via tail-truncation and pointwise bounds for rooted host graphs, and record several consequences, e.g.\ for random directed graph models and local weak limits, where the inequality reduces tree statistics to controlled pure in- and out-degree moments.
\end{abstract}

\section{Introduction}\label{sec:intro}

Sidorenko \cite{Sidorenko_1994} introduced a partial order on graphs based on homomorphism counts and established fundamental extremal properties for trees. In modern terminology, the comparison can equivalently be formulated in terms of homomorphism densities, making the framework compatible
with graphon theory and limit objects.

In this paper we study the \emph{directed} analogue of Sidorenko's
tree inequality. Throughout, a \emph{digraph} $G$ means a finite directed simple loopless graph. Occasionally, we distinguish a root vertex $o\in V(G)$ and write $(G,o)$ when we wish to treat $G$ as rooted. For a digraph $G$ and $v\in V(G)$, write $\din(v)$ and $\dout(v)$ for the in- and out-degree of $v$. Let $\hom(G,H)$ denote the number of digraph homomorphisms mapping the digraphs $G$ to the digraph $H$.

\pagebreak

\begin{theorem}[Directed Sidorenko inequality for trees]
\label{thm:main-directed-sidorenko}
Let $T$ be a directed tree on $k$ vertices. Then for every finite directed graph $H$,
\[
\hom(T,H)
\;\le\;
\max\big\{\hom(S_{0,k-1},H),\hom(S_{k-1,0},H)\big\},
\]
where $S_{0,k-1}$ and $S_{k-1,0}$ denote the pure in- and out-star graphs on $k$ vertices.
\end{theorem}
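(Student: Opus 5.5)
Writing $a_v=\din(v)$ and $b_v=\dout(v)$ for $v\in V(H)$, the two star counts are the power sums $A=\sum_v a_v^{k-1}$ and $B=\sum_v b_v^{k-1}$. (Here $S_{0,k-1}$ is a centre with $k-1$ in-neighbours, whose image must have in-degree at least the number of distinct leaf images; homomorphisms allow repeated leaf images, so $\hom(S_{0,k-1},H)=\sum_v \din(v)^{k-1}$.) The goal is to show $\hom(T,H)\le \max(A,B)$. The plan is induction on $k$ via a \emph{leaf-reallocation} scheme: repeatedly move leaves of $T$ towards a single centre, showing each move does not decrease $\hom(\cdot,H)$ by more than what the maximum of the two stars can absorb. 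Directly, though, I would aim for a cleaner route through an inequality of the form
\[
\hom(T,H)\le \prod_{v\in V(T)}\Big(\sum_{x\in V(H)} a_x^{k-1}\Big)^{\alpha_v}\Big(\sum_{x} b_x^{k-1}\Big)^{\beta_v},
\]
with exponents summing to one, which immediately gives the bound $\le\max(A,B)$.

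Concretely, I would root $T$ at a vertex $r$ and compute $\hom(T,H)$ by dynamic programming. For each vertex $u$ of $T$ and each $x\in V(H)$, let $f_u(x)$ be the number of homomorphisms of the subtree below $u$ sending $u\mapsto x$. Then $f_u(x)=\prod_{c}\big(\sum_{y\sim x} f_c(y)\big)$, where $y$ ranges over the out-neighbours or the in-neighbours of $x$ according to the orientation of the edge $uc$. The key step is an edge-by-edge Hölder argument. For an edge $x\to y$ of $H$, I would assign weights so that each edge term is bounded by a geometric mean of $a$- and $b$-powers, i.e.\ use the standard trick from Sidorenko's undirected proof:
\[
\hom(T,H)=\sum_{\phi}\prod_{e} 1 \le \sum_\phi \prod_{uv\in E(T),\,u\to v} \big(b_{\phi(u)}^{\,\cdot}a_{\phi(v)}^{\,\cdot}\big)\cdots.
\]
More precisely, I would write $\hom(T,H)$ as an expectation over a random homomorphism generated by a Markov-type exploration: choose $\phi(r)$, then for each edge pick the neighbour uniformly among the $b$ or $a$ options. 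This gives
\[
\hom(T,H)=\sum_{x}\E\Big[\prod_{u\ne r}\deg^{\pm}(\phi(\mathrm{parent}(u)))\Big],
\]
where each non-root vertex contributes the relevant in- or out-degree of its parent's image. Grouping these contributions, each vertex $w$ of $T$ contributes $a_{\phi(w)}^{i_w}b_{\phi(w)}^{o_w}$, where $i_w,o_w$ count its children joined by in- and out-edges respectively, with $i_w+o_w\le k-1$.

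Then apply weighted AM--GM, $a^ib^o\le \frac{i}{i+o}a^{i+o}+\frac{o}{i+o}b^{i+o}$, and Hölder or the power-mean inequality along the tree to raise each factor to the full exponent $k-1$. The main obstacle is making this Hölder step correct. The measure on $\phi(w)$ is not the stationary one, so terms like $\E[a_{\phi(w)}^{k-1}]$ for non-root $w$ must be compared with $\sum_x a_x^{k-1}$. Under a uniformly chosen edge, the head is in-degree biased, which gives exactly the identity $\sum_{x\to y} a_y^{m-1}=\sum_y a_y^{m}$. I would therefore try to choose the root and the walk so that each step converts one size-bias factor into one power, in the manner of the Sidorenko/Lov\'asz entropy argument for undirected trees. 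The difficulty is that in/out biasing mixes, because out-degree bias at the tail is measured at a vertex whose in-degree is relevant. This is exactly where I would fall back on leaf reallocation: move a leaf attached to $w$ to its parent, reducing to trees where mixed terms appear only at the centre, and close the induction by Hölder. A final AM--GM of $A^{\theta}B^{1-\theta}\le\max(A,B)$ finishes the proof.
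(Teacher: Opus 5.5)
Your ingredients (leaf reallocation, H\"older, and the closing step $A^{\theta}B^{1-\theta}\le\max\{A,B\}$) are the right ones. The gap is that the step carrying all the weight is never supplied, and the concrete move you fall back on does not work.

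\emph{The exploration route.} The identity $\hom(T,H)=\sum_x\E_x\bigl[\prod_w \din(\phi(w))^{i_w}\dout(\phi(w))^{o_w}\bigr]$ is fine. Applying AM--GM vertex by vertex, however, does not produce homomorphism counts of other trees. The exploration law still follows the original orientations, so you get sums over $\mathrm{Hom}(T,H)$ reweighted by ratios such as $(\din/\dout)^{o_w}$ at the images. You then need to compare the non-stationary laws of $\phi(w)$ with $A$ and $B$, which is exactly the obstacle you flag, and it is left open.

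\emph{The fallback move.} ``Move a leaf attached to $w$ to its parent'' cannot be monotone as a tree-to-tree move, for any choice of root or orientation of the moved arc. By Proposition~\ref{prop:3arc-incomparability}, no two non-isomorphic $3$-arc trees are comparable. Concretely, moving the leaf $v_3$ of $P_{+++}\colon v_0\to v_1\to v_2\to v_3$ to $v_1$ gives $S_{1,2}$ or $S_{2,1}$. The appendix lists $4$-vertex hosts with $\hom(S_{1,2},H)=8<9=\hom(P_{+++},H)$ and $\hom(S_{2,1},H')=8<9=\hom(P_{+++},H')$. So any valid reallocation must bound $\hom(T,H)$ by a geometric mean (or maximum) over several candidate trees, with the winner depending on $H$. ``Close the induction by H\"older'' does not say how.

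\emph{The missing idea: where to apply H\"older.}
\begin{itemize}
\item Delete all leaves of $T$ to get $\mathrm{sk}(T)$, and take two leaves $a,b$ of the \emph{skeleton}.
\item Let $i_a,o_a$ (resp.\ $i_b,o_b$) count the pendant in-/out-leaves of $T$ at $a$ (resp.\ $b$). Set $m_a=i_a+o_a$, $m_b=i_b+o_b$, and $m=m_a+m_b$.
\item Let $N(u,w)$ count homomorphisms of $T$ minus these pendant leaves with $a\mapsto u$ and $b\mapsto w$. Then $\hom(T,H)=\sum_{u,w}N(u,w)\din(u)^{i_a}\dout(u)^{o_a}\din(w)^{i_b}\dout(w)^{o_b}$.
\item Apply H\"older with respect to the measure $N$ (exponents $m/m_a$ and $m/m_b$). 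Then apply it on each marginal (exponents $m_a/i_a$ and $m_a/o_a$, and likewise at $b$).
\item The result bounds $\hom(T,H)$ by a weighted geometric mean of the counts of four trees with the same number of arcs. These are obtained by moving all $m$ pendant leaves to $a$ or to $b$ and making them all in-leaves or all out-leaves.
\item Each of these trees has one more leaf than $T$, so iteration ends at a (possibly mixed) star, and Lemma~\ref{lem:holder-star} finishes.
\end{itemize}

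This sidesteps your stationarity issue. The marginal sums $\sum_u N_a(u)\din(u)^m$ are again homomorphism counts of trees, so nothing is compared with $A$ or $B$ until the tree has become a star. Moreover, if you iterate the geometric-mean form instead of passing to the maximum at each step, you obtain the $H$-independent bound $\hom(T,H)\le A^{\theta}B^{1-\theta}$, with $\theta$ depending only on $T$, that you were aiming for.
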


The proof of Theorem~\ref{thm:main-directed-sidorenko}, given in Section~\ref{sec:directedtrees} is purely combinatorial and is based on an iterative leaf-removal scheme combined with Hölder's inequality. We write $T\preceq \widetilde T$ if $\hom(T,H)\le \hom(\widetilde T,H)$ for all finite digraphs $H$. Lov{\'a}sz showed that finite digraphs are determined by their homomorphism profiles up to isomorphy~\cite{Lovasz71}. Thus, $\preceq$ defines a partial order on (isomorphism classes of) fixed size connected digraphs of which Theorem~\ref{thm:main-directed-sidorenko} identifies two maximal elements.

\paragraph{Further contributions of the paper.}
In Section~\ref{sec:homorder} we investigate the partial order $\preceq$ on directed trees more closely. We show by exhaustive enumeration that all eight isomorphism types of $3$-arc directed trees are pairwise incomparable (Proposition~\ref{prop:3arc-incomparability}), with explicit witness hosts recorded in Appendix~\ref{app:witnesses-3arcs-matrices}. More generally, we prove that directed stars of a given size are pairwise incomparable (Lemma~\ref{lem:stars-fixedsize-incomparable}), and conjecture that $\preceq$ restricted to directed trees of any fixed size is discrete. We also introduce a reversal-symmetrised variant $\preceq_{\max}$ in which the pure star becomes the unique maximum (Remark~\ref{rem:discreteness-conjecture}).

Section~\ref{sec:strength} presents two extensions of the main inequality. First, we establish a directed tail-truncated inequality (Theorem~\ref{thm:directed-tail}) in the spirit of Janson--Kurauskas~\cite{JK2023}, bounding the contribution of high-degree root images at the cost of a factor~$2$ due to the loss of endpoint symmetry. Second, we develop a rooted message-passing framework (Propositions~\ref{prop:rooted-message-passing} and~\ref{prop:pointwise-holder-envelope}) that yields \emph{pointwise} upper bounds on rooted homomorphism counts in terms of local in- and out-degree $\ell^p$-sums, interpolating between crude degree bounds and progressively finer control of neighbourhood degree heterogeneity.

In Section~\ref{sec:applications} we record several applications. Combining the tail inequality with Benjamini--Schramm convergence for digraphs, we obtain a directed analogue of Kurauskas' theorem~\cite{Kurau22}: convergence of pure in- and out-degree moments of order~$h{-}1$ is equivalent to convergence of all connected directed subgraph counts of size~$h$ (Theorem~\ref{thm:directed-kurauskas}). We further derive moment bounds for directed tree patterns in local weak limits (Corollary~\ref{prop:branching-moment}), show that the pointwise $\ell^p$-envelope provides finite fractional moments in heavy-tailed network models where integer moments diverge, and note consequences for entropy maximisation in directed exponential random graph models (Corollary~\ref{prop:entropy-domination}). Finally, we translate our results into matrix inequalities bounding $\hom(T,A)$ in terms of row- and column-sum moments for nonnegative matrices (Corollary~\ref{cor:weighted-tree}), extending the path-based bounds of Merikoski--Virtanen~\cite{MerikoskiVirtanen1991} to arbitrary directed trees.

Appendix~\ref{sec:configorder} establishes the analytic counterpart of the combinatorial order: we define configuration product integrals for nonsymmetric kernels (directed graphons), prove $L^1$-continuity and pullback invariance, and show that the analytic order on kernels is equivalent to the combinatorial homomorphism density order on finite digraphs (Proposition~\ref{thm:orders_equiv_density}).

\section{Sidorenko's inequality for directed trees}\label{sec:directedtrees}

In this section we prove Theorem~\ref{thm:main-directed-sidorenko}.  
The argument is similar in spirit to Sidorenko's original proof for undirected trees~\cite{Sidorenko_1994}, but the directed situation requires a form of weighted leaf-reallocation that distinguishes in- and out-degrees. We follow the streamlined approach using only H\"older's inequality on finite sets that was executed for the undirected setting in \cite{LM2026}.

\subsection{Directed stars}

Throughout, $S_{n-k,k}$ denotes the directed star with $n+1$ vertices and $k$ outgoing arcs and $n-k$ incoming arcs.  
That is, $S_{n-k,k}$ consists of a central vertex connected to $n$ leaves, where $k$ leaves are oriented outward and $n-k$ leaves are oriented inward.

\begin{lemma}[Directed stars]
	\label{lem:holder-star}
	Let $H$ be any finite digraph. For every $n \ge k \ge 0$,
	\[
	\operatorname{hom}(S_{n-k,k},H)
	=
	\sum_{v\in V(H)} \din(v)^{n-k} \dout(v)^{k}
	\le
	\left(\sum_{v\in V(H)} \din(v)^n\right)^{\frac{n-k}{n}}
	\left(\sum_{v\in V(H)} \dout(v)^n\right)^{\frac{k}{n}}.
	\]
	In particular,
	\[
	\operatorname{hom}(S_{n-k,k},H)
	\le
	\max\bigl\{ \operatorname{hom}(S_{n,0},H),\; \operatorname{hom}(S_{0,n},H) \bigr\}.
	\]
\end{lemma}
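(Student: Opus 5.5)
The plan has three steps: compute the star count exactly, apply Hölder's inequality, and then compare a weighted geometric mean with a maximum.

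\textbf{Step 1 (counting).} A homomorphism $\varphi\colon S_{n-k,k}\to H$ is determined by the image $v=\varphi(c)$ of the centre $c$ together with the images of the leaves. Each of the $n-k$ in-leaves $\ell$ carries an arc $\ell\to c$. Its image $\varphi(\ell)$ must therefore be an in-neighbour of $v$, which gives $\din(v)$ choices. Since $H$ is simple and loopless, arcs into $v$ correspond bijectively to in-neighbours. Similarly, each of the $k$ out-leaves has $\dout(v)$ choices. The leaves are pairwise non-adjacent and need not have distinct images, so these choices are independent. Summing over $v$ gives the stated identity. This works for every $0\le k\le n$; when $n-k=0$ or $k=0$ we use the convention $0^0=1$, which matches the count.

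\textbf{Step 2 (Hölder).} For $0<k<n$, take the exponents $p=n/(n-k)$ and $q=n/k$, so that $1/p+1/q=1$. Apply Hölder's inequality on the finite set $V(H)$ with counting measure to the nonnegative functions $f(v)=\din(v)^{n-k}$ and $g(v)=\dout(v)^{k}$. This gives
\[
\sum_v f g\le \Bigl(\sum_v f^p\Bigr)^{1/p}\Bigl(\sum_v g^q\Bigr)^{1/q},
\]
and since $f^p=\din(v)^n$ and $g^q=\dout(v)^n$, this is exactly the claimed bound. In the boundary cases $k=0$ and $k=n$ the inequality is an equality, because one factor has exponent $0$ and the other has exponent $1$.

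\textbf{Step 3 (maximum).} Write $A=\sum_v\din(v)^n$ and $B=\sum_v\dout(v)^n$. By Step 1, $A=\hom(S_{n,0},H)$ and $B=\hom(S_{0,n},H)$. The weighted geometric mean satisfies
\[
A^{(n-k)/n}B^{k/n}\le \max\{A,B\}^{(n-k)/n+k/n}=\max\{A,B\},
\]
which yields the second claim.

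There is no real obstacle in this argument. The only points needing care are the degenerate exponents and the $0^0$ convention, both handled separately above. The count in Step 1 relies on $H$ having no loops or multiple arcs, which holds by the standing assumption that digraphs are simple and loopless.
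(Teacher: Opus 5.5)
Your proposal is correct and follows the paper's proof: count by choosing the centre image and then the leaves, apply H\"older with exponents $n/(n-k)$ and $n/k$, and bound the weighted geometric mean by the maximum. Your separate handling of $k\in\{0,n\}$ and the $0^0$ convention makes explicit what the paper leaves implicit; the degenerate case $n=0$ is tacitly excluded in both arguments.
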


\begin{proof}
Counting the possible choices for the image of first the centre and then the leaves immediately yields
\[
\hom(S_{n-k,k},H)=\sum_{v\in V(H)} \din(v)^{\,n-k}\dout(v)^{\,k}.
\]
The first claimed inequality now follows immediately by applying the H\"older inequality with exponents $p=\nicefrac{n}{n-k}$ and $q=\nicefrac{n}{k}$. For the extremality statement simply use that for any $A,B\ge 0$ and $\theta\in[0,1]$, one has $A^{1-\theta}B^\theta\le \max\{A,B\}$.
\end{proof}

Having identified $S_{n,0}$ and $S_{0,n}$ as the extremal stars among all $S_{n-k,k}$, the remainder of the proof consists of showing that \emph{every directed tree} can be transformed into one of these stars while never decreasing its homomorphism count.

\subsection{Leaf-reallocation}

Let $T$ be a directed tree.  
A vertex is called a \emph{leaf} if it has total degree one. Removing all leaves from $T$ produces a (possibly trivial) directed \emph{skeleton} tree $\textup{sk}(T)$.

\begin{lemma}[Leaf reallocation]\label{lem:leaf-homog}
Let $T$ be a directed tree with $k$ arcs and at least one internal vertex.
Let $T'$ be obtained from $T$ by deleting all leaves of $T$.
Assume $E(T')\neq \emptyset$ and choose two distinct \emph{leaves of $T'$}, say $a,b$.
For $j\in\{a,b\}$ let $i_j$ (resp.\ $o_j$) denote the number of deleted in-leaves (resp.\ deleted out-leaves) of $T$ that were adjacent to $j$ before passing to $T'$, and set $m_j:=i_j+o_j$.

Then there exists a directed tree $T^\ast$ with $k$ arcs such that
\begin{enumerate}[label=(\roman*)]
\item $\hom(T,H)\le \hom(T^\ast,H)$ for every finite digraph~$H$;
\item $T^\ast$ has one more leaf than $T$;
\item all pendant leaves at the surviving branch vertex of $T^\ast$ are of the \emph{same} type (all in-leaves or all out-leaves).
\end{enumerate}
\end{lemma}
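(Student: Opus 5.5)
The plan is to write $\hom(T,H)$ as a sum over images of the part of $T$ that does not involve $a$, $b$ or their pendant leaves. Then $a$ and $b$ become independent given that core, and the whole task reduces to rearranging leaves at $a$ and $b$. I cannot see how to make the single tree $T^\ast$ in (i) independent of $H$; that step is the main obstacle and is discussed at the end.

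\textbf{Step 1: factorisation.} Let $T_0$ be $T$ with $a$, $b$ and all pendant leaves at $a$ and $b$ removed. Let $p_a$ and $p_b$ be the unique neighbours of $a$ and $b$ in $T'$. Summing over homomorphisms $\varphi\colon T_0\to H$ gives
\[
\hom(T,H)=\sum_{\varphi}\; F_a\bigl(\varphi(p_a)\bigr)\,F_b\bigl(\varphi(p_b)\bigr).
\]
Here $F_j(x)$ is the sum, over those $y$ adjacent to $x$ in the direction of the arc between $p_j$ and $j$, of $\din(y)^{i_j}\dout(y)^{o_j}$. This is the same centre-then-leaves count used in the proof of Lemma~\ref{lem:holder-star}.

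\textbf{Step 2: homogenising the leaf types.} At each vertex $j\in\{a,b\}$, apply the two-exponent H\"older step from Lemma~\ref{lem:holder-star} pointwise in $y$:
\[
\din(y)^{i_j}\dout(y)^{o_j}\;\le\;\din(y)^{m_j\theta}\,\dout(y)^{m_j(1-\theta)},\qquad \theta=i_j/m_j .
\]
Push this through the outer sums with a second application of H\"older in $\varphi$, as in the undirected leaf-moving argument of~\cite{LM2026}. The convexity (log-convexity of $\ell^p$-norms) is then used to move all $m_a+m_b$ pendant leaves onto one of the two vertices and to make them all of one type.

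The resulting candidate $T^\ast$ keeps the same $T_0$ and the same two arcs $p_a a$ and $p_b b$. One of $a,b$ (say $b$) now carries no pendant leaves, so it has become a leaf; this gives (ii). The other vertex, the surviving branch vertex, carries $m_a+m_b$ leaves of a single type; this gives (iii). No arcs are created or destroyed, so $T^\ast$ still has $k$ arcs.

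\textbf{Step 3 and the main obstacle: the quantifier in (i).} Following Step 2 literally gives
\[
\hom(T,H)\le \hom(T_1,H)^{\alpha_1}\hom(T_2,H)^{\alpha_2}\cdots\le\max_r \hom(T_r,H),
\]
which is a bound by a maximum over several candidate trees. Statement (i) needs one tree that works for every $H$. A Sidorenko-type mixture bound only collapses to a single tree when all the candidate factors coincide.

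To try to force this, I would choose $a$ and $b$ so that their pendant configurations are exchangeable. The natural choice is $a$ and $b$ hanging from the same core vertex through arcs with the same orientation, so $p_a=p_b$ and $F_a,F_b$ have the same form. Then apply Cauchy--Schwarz in the form
\[
F_aF_b\le\tfrac12\bigl(F_a^2+F_b^2\bigr),
\]
combined with the power-mean inequality inside $F$. The aim is a bound by a single tree in which one branch carries $i_a+i_b$ in-leaves, or one carries $o_a+o_b$ out-leaves, with the type fixed combinatorially (for instance by which of $i_a+i_b$ and $o_a+o_b$ is larger).

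I expect this step to be genuinely hard, and possibly false in general. The mixed factor $\din^{i}\dout^{o}$ admits no pointwise domination by a pure power that holds for all $H$. So before writing details I would first test the claimed single-$T^\ast$ version on small hosts, using the witness-host style of Section~\ref{sec:homorder}.
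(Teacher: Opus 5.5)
\textbf{The failing step, and why it cannot be repaired.} You are right about the quantifier, and the paper's proof does not overcome it either. The paper fixes $H$ first and takes $T^\ast$ to be whichever of four candidate trees attains the maximum, so $T^\ast$ depends on $H$. Your Step 3 cannot work, because the single-$T^\ast$ reading of (i) is false.

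Take $T=P_{+-+}$, i.e.\ $v_0\to v_1\leftarrow v_2\to v_3$. Then $T'$ is the arc $v_2\to v_1$, so the lemma applies, and (ii) with $k=3$ forces $T^\ast$ to be a $3$-arc star. But by Proposition~\ref{prop:3arc-incomparability} no $3$-arc star dominates $P_{+-+}$. Concretely:
\begin{itemize}
\item the host with arcs $2\to0,\ 2\to1$ has $\hom(S_{3,0},H)=2<4=\hom(P_{+-+},H)$;
\item the host with arcs $1\to0,\ 2\to0$ has $\hom(S_{0,3},H)=2<4$;
\item a single arc has $\hom(S_{1,2},H)=\hom(S_{2,1},H)=0<1=\hom(P_{+-+},H)$.
\end{itemize}
Your exchangeable configuration $p_a=p_b$ is not available here in any case.

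\textbf{The correct statement.} What is true, and all that the proof of Theorem~\ref{thm:main-directed-sidorenko} uses (it fixes $H$ and iterates with $H$-dependent choices), is a per-host version. For every $H$, some $T^\ast\in\{T_a^{\mathrm{in}},T_a^{\mathrm{out}},T_b^{\mathrm{in}},T_b^{\mathrm{out}}\}$, depending on $H$, satisfies $\hom(T,H)\le\hom(T^\ast,H)$ together with (ii)--(iii). So drop Step 3 and state (i) in that form.

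\textbf{How the paper proves the per-host bound.} The paper keeps $a,b$ and lets $N(u,w)$ count homomorphisms of $T(a,b)$ ($T$ without the pendant leaves at $a,b$) with $a\mapsto u$, $b\mapsto w$. It writes $\hom(T,H)=\sum_{u,w}N(u,w)\din(u)^{i_a}\dout(u)^{o_a}\din(w)^{i_b}\dout(w)^{o_b}$. It then applies H\"older with exponents $m/m_a,\,m/m_b$ (where $m=m_a+m_b$) against $N$, and again with exponents $m_a/i_a,\,m_a/o_a$ (resp.\ $m_b/i_b,\,m_b/o_b$) against the marginals $N_a,N_b$. This gives
\[
\hom(T,H)\le \hom(T_a^{\mathrm{in}},H)^{i_a/m}\,\hom(T_a^{\mathrm{out}},H)^{o_a/m}\,\hom(T_b^{\mathrm{in}},H)^{i_b/m}\,\hom(T_b^{\mathrm{out}},H)^{o_b/m},
\]
where $T_j^{\mathrm{in}}$ ($T_j^{\mathrm{out}}$) carries all $m$ leaves at $j$ as in-leaves (out-leaves). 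For $T=P_{+-+}$ this reads $\hom(P_{+-+},H)\le\hom(S_{3,0},H)^{1/2}\hom(S_{0,3},H)^{1/2}$.

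\textbf{Corrections to your Steps 1--2.} Once made precise, these steps are the paper's argument, but three points need fixing:
\begin{itemize}
\item The displayed pointwise inequality in Step 2 is an identity, since $m_j\theta=i_j$. All the work sits in the weighted H\"older steps.
\item Those H\"older steps must be taken over the joint images $(u,w)$ of $a$ and $b$, not over homomorphisms $\varphi$ of $T_0$. Otherwise you must handle $F_a(x)^{m/m_a}$, a power of a neighbourhood sum. It is at least as large as $\sum_y(\din(y)^{i_a}\dout(y)^{o_a})^{m/m_a}$, the quantity that the second H\"older step turns into tree counts, so the bound points the wrong way.
\item Your $T_0$ is empty when $T'$ is the single arc $ab$, so the factorisation in Step 1 needs the $N(u,w)$ form in that case too.
\end{itemize}
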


\begin{proof}
Fix a host digraph $H$.  Write $m:=m_a+m_b=i_a+o_a+i_b+o_b$ for the total number of deleted leaves at $a$ and~$b$.

Let $T(a,b)$ be the tree obtained from $T$ by deleting all pendant leaves at $a$ and $b$ (keeping $a,b$ themselves).  For $u,w\in V(H)$ set
\[
N(u,w):=\#\{\phi\in\hom(T(a,b),H):\phi(a)=u,\ \phi(b)=w\}.
\]
Each deleted leaf can be mapped independently, giving exactly $\din(u)$ choices per in-leaf and $\dout(u)$ choices per out-leaf at the image vertex.  Hence
\begin{equation}\label{eq:hom-bilinear}
\hom(T,H)=\sum_{u,w\in V(H)} N(u,w)\,
\din(u)^{i_a}\dout(u)^{o_a}\,
\din(w)^{i_b}\dout(w)^{o_b}.
\end{equation}

We bound the right-hand side in three successive applications of H\"older's inequality.

Apply H\"older to \eqref{eq:hom-bilinear} with the joint weight $N(u,w)$ and exponents
\[
p:=\frac{m}{m_a},\qquad q:=\frac{m}{m_b},\qquad \frac1p+\frac1q=1,
\]
treating $f(u,w):=\din(u)^{i_a}\dout(u)^{o_a}$ and $g(u,w):=\din(w)^{i_b}\dout(w)^{o_b}$.
Since $f$ depends only on $u$ and $g$ only on $w$, this yields
\begin{equation}\label{eq:step1}
\hom(T,H)\le
\biggl(\sum_{u} N_a(u)\,\din(u)^{i_a p}\dout(u)^{o_a p}\biggr)^{\nicefrac{1}{p}}
\biggl(\sum_{w} N_b(w)\,\din(w)^{i_b q}\dout(w)^{o_b q}\biggr)^{\nicefrac{1}{q}},
\end{equation}
where $N_a(u):=\sum_w N(u,w)$ and $N_b(w):=\sum_u N(u,w)$ are the marginals.
Note that $i_a p + o_a p = m$ and $i_b q + o_b q = m$, but the individual exponents $i_a p = i_a m/m_a$ and $o_a p = o_a m/m_a$ need not be integers.

Consider the first factor in \eqref{eq:step1}.  Apply H\"older to the sum over $u$ with the weight $N_a(u)$ and exponents
\[
r:=\frac{m_a}{i_a},\qquad s:=\frac{m_a}{o_a},\qquad \frac1r+\frac1s=\frac{i_a+o_a}{m_a}=1,
\]
applied to the two factors $\din(u)^{i_a p}$ and $\dout(u)^{o_a p}$.  Since $i_a p\cdot r = m$ and $o_a p\cdot s = m$, this gives
\[
\sum_u N_a(u)\,\din(u)^{i_a p}\dout(u)^{o_a p}
\le
\biggl(\sum_u N_a(u)\,\din(u)^{m}\biggr)^{i_a/m_a}
\biggl(\sum_u N_a(u)\,\dout(u)^{m}\biggr)^{o_a/m_a}.
\]
Observe that $\sum_u N_a(u)\,\din(u)^{m} = \hom(T_a^{\mathrm{in}},H)$,
where $T_a^{\mathrm{in}}$ is the tree $T(a,b)$ with $m$ pure in-leaves attached to $a$ (and $b$ retains no pendant leaves, hence is a leaf of the full tree).  Likewise $\sum_u N_a(u)\,\dout(u)^{m} = \hom(T_a^{\mathrm{out}},H)$.

Apply the identical argument to the second factor in \eqref{eq:step1} with exponents $r':=m_b/i_b$ and $s':=m_b/o_b$:
\[
\sum_w N_b(w)\,\din(w)^{i_b q}\dout(w)^{o_b q}
\le
\biggl(\sum_w N_b(w)\,\din(w)^{m}\biggr)^{i_b/m_b}
\biggl(\sum_w N_b(w)\,\dout(w)^{m}\biggr)^{o_b/m_b}.
\]

Substituting both bounds into \eqref{eq:step1} and using $\frac{i_a}{m_a}\cdot\frac{1}{p}=\frac{i_a}{m}$ (and similarly for the other three terms), we obtain
\begin{equation}\label{eq:geom-mean}
\hom(T,H)\le
\hom(T_a^{\mathrm{in}},H)^{i_a/m}\;
\hom(T_a^{\mathrm{out}},H)^{o_a/m}\;
\hom(T_b^{\mathrm{in}},H)^{i_b/m}\;
\hom(T_b^{\mathrm{out}},H)^{o_b/m}.
\end{equation}
Since $\frac{i_a+o_a+i_b+o_b}{m}=1$, the right-hand side is a weighted geometric mean, hence
\[
\hom(T,H)\le \max\bigl\{
\hom(T_a^{\mathrm{in}},H),\;
\hom(T_a^{\mathrm{out}},H),\;
\hom(T_b^{\mathrm{in}},H),\;
\hom(T_b^{\mathrm{out}},H)
\bigr\}.
\]
Let $T^\ast$ be the tree attaining this maximum.  By construction, $T^\ast$ has $m$ pendant leaves at either $a$ or $b$, all of the same type (all in-leaves or all out-leaves), and the other vertex $b$ or $a$ has become a genuine leaf.  Hence $T^\ast$ has one more leaf than $T$ and satisfies (i)--(iii).
\end{proof}

Recall that $T\preceq \widetilde T$ if $\hom(T,H)\le \hom(\widetilde T,H)$ for all finite digraphs $H$.

\begin{proof}[Proof of Theorem~\ref{thm:main-directed-sidorenko}]
Let $T$ be a directed tree with $k$ arcs and $H$ a given host graph. If $T$ is a star we are done by Lemma~\ref{lem:holder-star}.
Otherwise, $T'$ is non-empty and contains two distinct leaves; applying Lemma~\ref{lem:leaf-homog} strictly increases the number of leaves while increasing the homomorphism count. Iterating yields a pure star $S$ with $k$ arcs that dominates $T$ in terms of homomorhisms into $H$.
\end{proof}

\section{The homomorphism order on digraphs}\label{sec:homorder}
We now investigate the partial order $\preceq$ more closely, beginning with the smallest non-trivial case. In the undirected case, the star dominates the path among $3$-edge trees; in the directed setting, no such comparability survives. 
\begin{proposition}[Complete incomparability for $3$-arc directed trees]
\label{prop:3arc-incomparability}
Let $\mathcal T_3$ denote the set of all directed trees with three arcs.
For any two non-isomorphic $T,T'\in\mathcal T_3$, there exist host
digraphs $H_>$ and $H_<$ such that
\[
\hom(T,H_>)>\hom(T',H_>),
\qquad
\hom(T,H_<)<\hom(T',H_<).
\]
In particular, the homomorphism order restricted to $\mathcal T_3$ is discrete.
\end{proposition}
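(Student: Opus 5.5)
The plan is to enumerate $\mathcal T_3$ explicitly, write each homomorphism count as an explicit functional of the host, and then exhibit, for every ordered pair $(T,T')$, a host on which $T$ strictly beats $T'$. Since every pair must be separated in both directions, this one ordered-pair statement covers both $H_>$ and $H_<$.

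\emph{Step 1: enumeration.} A tree on four vertices is either $K_{1,3}$ or $P_4$. Orienting $K_{1,3}$ gives the four stars $S_{3,0},S_{2,1},S_{1,2},S_{0,3}$. Orienting $P_4$ gives $8$ arc-direction sequences $(d_1,d_2,d_3)\in\{\pm\}^3$. Reversing the path acts on these sequences by $(d_1,d_2,d_3)\mapsto(-d_3,-d_2,-d_1)$. This involution has no fixed point, since a fixed point would need $d_2=-d_2$. Hence there are exactly $4$ path types: $\to\to\to$, $\to\to\leftarrow$, $\to\leftarrow\to$, $\leftarrow\to\to$. Together with the stars this gives the eight types.

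\emph{Step 2: formulas.} For the stars we use Lemma~\ref{lem:holder-star}: $\hom(S_{a,b},H)=\sum_v \din(v)^a\dout(v)^b$. For the paths, writing $A$ for the adjacency matrix of $H$ and $\mathbf 1$ for the all-ones vector, each count is $\mathbf 1^\top M_1M_2M_3\mathbf 1$ with $M_i\in\{A,A^\top\}$ chosen according to $d_i$. For example, $\hom(\to\to\to,H)=\mathbf 1^\top A^3\mathbf 1$ and $\hom(\to\leftarrow\to,H)=\mathbf 1^\top AA^\top A\mathbf 1$.

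\emph{Step 3: separating hosts.} We separate pairs by parametrised host families and compare leading-order growth in a parameter $n$.
\begin{itemize}
\item The out-star host (one centre with $n$ out-neighbours) gives $n^3$ for $S_{0,3}$, order $n$ at most for the other stars, and $0$ for every path. The in-star host does the same for $S_{3,0}$. This separates each pure star above everything else.
\item Mixed stars are bounded by the pure ones (Lemma~\ref{lem:holder-star}), so they can never beat both pure stars on one host. They only need to beat each pure star \emph{separately}. A centre with in-degree $2n$ and out-degree $n$ gives $S_{2,1}\sim 4n^3$ against $S_{0,3}\sim n^3$. Symmetric unbalanced choices handle $S_{1,2}$ versus $S_{3,0}$, and the two mixed stars against each other.
\item To let a path beat a star, we use hosts with many vertices of bounded degree but many long walks of the right pattern. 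Typical examples are layered complete bipartite blow-ups $V_1\to V_2\to V_3\to V_4$ (with arcs reversed between layers as prescribed by the path type), where each layer has size $n$. In such a host the matching path type counts $n^4$, while each star counts $O(n^3)$. In the layered blow-up adapted to a given path, any other path type must fold back on the layered structure, so it should typically count only $O(n^3)$. Conversely, the star hosts above make any star beat every path.
\end{itemize}

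\emph{Main obstacle.} The hard part is completing the path-versus-path comparisons in both directions, together with any pairs where the leading orders of $n$ coincide and lower-order terms decide. I expect the layered blow-ups to separate most path pairs at order $n^4$ versus $n^3$. For the residual ties I would not hand-optimise. Instead I would run an exhaustive search over small digraphs (up to about $4$–$5$ vertices), compute all eight counts via the matrix formulas of Step 2, and record, for each of the $56$ ordered pairs, a witness host together with its adjacency matrix, as in Appendix~\ref{app:witnesses-3arcs-matrices}. Verifying each witness is then a finite arithmetic check. Discreteness of $\preceq$ on $\mathcal T_3$ follows immediately, since no two non-isomorphic types are comparable.
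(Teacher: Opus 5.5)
\emph{Verdict: genuine gap.} The mechanism you give for pairs in which a path must win fails, so those pairs are unproved until you actually write down the witnesses from your deferred search.

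\emph{What works.} Your hosts for the star--star pairs and for stars beating paths are fine, up to two small slips noted at the end.

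\emph{Why the third bullet fails.} This bullet is your only mechanism for pairs where a path must win.
\begin{itemize}
\item Homomorphisms need not be injective, and in a complete bipartite blow-up folding costs nothing. If $Q[n]$ replaces each vertex of $Q$ by an independent set of size $n$, then $\hom(T,Q[n])=n^{4}\hom(T,Q)$ for every $4$-vertex tree $T$. So nothing is ever decided at ``order $n^4$ versus $n^3$''; the blow-up just reproduces the comparison on $Q$.
\item Your star count is also off. $\hom(S_{0,3},Q[n])=\sum_v\dout(v)^3$ runs over $\Theta(n)$ centres of out-degree $n$, so it is $\Theta(n^4)$, not $O(n^3)$.
\item Concretely, take $Q=1\to2\to3\to4$. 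Then $\hom(P_{+++},Q)=1$, while $P_{++-},P_{-++},S_{2,1},S_{1,2}$ give $2$ and $P_{+-+},S_{0,3},S_{3,0}$ give $3$. The ``adapted'' path is the strict minimum.
\item More fundamentally, Theorem~\ref{thm:main-directed-sidorenko} forbids any host on which a path beats both pure stars. So ``path $\sim n^4$, every star $O(n^3)$'' is impossible. A witness for, say, $P_{+++}$ beating $S_{0,3}$ must be lopsided in in-/out-degree, with $S_{3,0}$ at least as large. The paper's $4$-vertex host has $\hom(S_{0,3},H)=25<28=\hom(P_{+++},H)<43=\hom(S_{3,0},H)$. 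Unbalanced layers of sizes $9,1,2,2$ in $V_1\to V_2\to V_3\to V_4$ also work, giving $36>33$.
\end{itemize}

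\emph{Consequences.} None of the path-winning pairs is actually justified by your argument.
\begin{itemize}
\item Some can be rescued with hosts you already use. For example, a centre with in-degree $m=9$ and out-degree $n=2$ gives $\hom(P_{++-},H)=mn=18>17=n^3+m=\hom(S_{0,3},H)$.
\item Others are decided by none of your hosts. $P_{+++}$ vanishes on all your other hosts and is the strict minimum on its own blow-up. $P_{+-+}$ is at least as large as every other path on every host you propose.
\item These pairs rest entirely on the exhaustive search you defer. At that point your argument coincides with the paper's proof, which is exactly a finite table of explicit witness hosts checked by arithmetic (Appendix~\ref{app:witnesses-3arcs-matrices}).
\item The search must include $5$-vertex hosts. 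According to the paper, no host on at most $4$ vertices has $\hom(P_{+++},H)>\hom(P_{+-+},H)$. Its $5$-vertex witness is found via $\hom(P_{+-+},H)-\hom(P_{+++},H)=\sum_{x\to y}\bigl(\dout(x)\din(y)-\din(x)\dout(y)\bigr)$.
\end{itemize}

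\emph{Two minor slips.}
\begin{itemize}
\item On the out-star host, $P_{+-+}$ has $n^2$ homomorphisms, not $0$: send $v_0$ and $v_2$ to the centre. This is harmless, since $n^3>n^2$.
\item $S_{2,1}$ beating $S_{3,0}$ is not covered by your example, and it needs centre in-degree at least $2$. In-degree $2$ with out-degree $n\ge3$ gives $4n>8+n$, whereas in-degree $1$ gives $n<n+1$.
\end{itemize}
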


The proof is given by exhaustion of all combinations of the eight isomorphism types of loopless directed trees with three arcs in the Appendix~\ref{app:witnesses-3arcs-matrices}.

We next show that fixed size stars are generally incomparable, in particular the maximal pure stars do never cover the mixed stars in the homomorphism order.

\begin{lemma}[Incomparability of directed stars of given size]
\label{lem:stars-fixedsize-incomparable}
Fix an integer $h\ge 1$. Then, for any $a\neq c\in\{0,1,\dots,h\}$, the stars $S_{a,h-a}$ and $S_{c,h-c}$
are incomparable, i.e.\ there exist loopless digraphs $H,H'$ such that
\[
\hom(S_{a,h-a},H)>\hom(S_{c,h-c},H)
\qquad\text{and}\qquad
\hom(S_{a,h-a},H')<\hom(S_{c,h-c},H').
\]
Equivalently, within the family $\{S_{a,h-a}\}_{a=0}^h$ the order has no
nontrivial comparabilities.
\end{lemma}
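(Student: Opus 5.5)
By Lemma~\ref{lem:holder-star}, for every host $H$ we have $\hom(S_{a,h-a},H)=\sum_{v\in V(H)}\din(v)^{a}\dout(v)^{h-a}$. The statement therefore reduces to a question about which multisets of degree pairs $(\din(v),\dout(v))$ a loopless digraph can realise. The plan is to build hosts in which essentially one type of vertex dominates the sum. WLOG $a<c$. The monomial $x^{a}y^{h-a}$ beats $x^{c}y^{h-c}$ exactly when $y>x$, and the reverse holds when $x>y$. So it suffices to produce one host where the degree pairs with $\dout>\din$ carry the sum, and another where those with $\din>\dout$ do.

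\textbf{First host.} Take $H$ to be the out-star with centre $z$ and $N$ leaves, all arcs directed $z\to\ell$. Then $z$ has degree pair $(0,N)$ and every leaf has $(1,0)$. Note that $0^0=1$ and $0^{j}=0$ for $j\ge 1$. This gives
\[
\hom(S_{a,h-a},H)=[a=0]\,N^{h}+N\,[h=a],
\]
and the analogous expression for $c$. The case analysis depends on where $a$ and $c$ sit:
\begin{itemize}
\item If $a=0<c<h$: the left side is $N^{h}$ and the right side is $0$.
\item If $a=0$ and $c=h$: the sides are $N^{h}$ versus $N$, so taking $N\ge 2$ works, using $h\ge 1$.
\item If $0<a<c$: both counts involving the centre vanish, so a mixed term is needed. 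Use instead the host $H=\vec K_{N,M}$, the complete bipartite digraph with all arcs from a part $A$ of size $N$ to a part $B$ of size $M$. Then $A$-vertices have pair $(0,M)$ and $B$-vertices have $(N,0)$, and every mixed star with $0<a<h$ has count $0$. That is also useless.
\end{itemize}

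\textbf{Fix for the mixed case.} We need vertices with both degrees positive but unbalanced. Take a "blow-up" host: a vertex set $X$ of size $s$, a set $Y$ of size $t$, and a set $Z$ of size $r$, with all arcs $Y\to X$ and $X\to Z$. Each $x\in X$ then has pair $(t,r)$; $Y$-vertices have $(0,s)$ and $Z$-vertices have $(s,0)$. For $0<a<c\le h$, choose $s=1$ and $r\gg t\ge 1$. The $X$-vertex contributes $t^{a}r^{h-a}$ versus $t^{c}r^{h-c}$, a ratio of $(r/t)^{c-a}\to\infty$. The $Y$- and $Z$-contributions are at most $[a=0]+r\,[a=h]$, which is negligible or zero; if $c=h$, the $Z$ term gives $r$ on the $c$ side, which is still dominated by $t^{a}r^{h-a}\ge r^{h-a}$ once $h-a\ge 2$, or, when $h-a=1$, by taking $t\ge 2$. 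The same construction covers the first two bullets.

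\textbf{Second host.} The reverse inequality follows by applying the same construction to the reversal $H^{\mathrm{rev}}$. Reversal swaps in- and out-degrees, so $\hom(S_{a,h-a},H^{\mathrm{rev}})=\hom(S_{h-a,a},H)$, and the roles of $a$ and $c$ are exchanged via $a\mapsto h-a$.

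The main obstacle is bookkeeping at the boundary values $a=0$ and $c=h$, where the auxiliary vertices with a zero degree contribute through the convention $0^0=1$. Choosing $t\ge 2$ and $r$ large, and checking the leading powers of $r$, should handle every case uniformly.
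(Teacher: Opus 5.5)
Your argument is essentially the paper's. With $s=1$ your three-layer host is exactly the paper's $H_{m,n}$: a centre with $t$ in-neighbours and $r$ out-neighbours. The comparison is driven by the same ratio $(r/t)^{c-a}$ of the centre's contributions. The only difference is organisation. The paper treats the boundary stars separately via the one-sided hosts $H_{m,0}$ and $H_{0,n}$, and gets the two directions by taking $m\gg n$ or $m\ll n$. You instead fix $a<c$, use a single host, and obtain the reverse inequality from $\hom(S_{a,h-a},H^{\mathrm{rev}})=\hom(S_{h-a,a},H)$ applied to the pair $(h-c,h-a)$; that reduction is correct.

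One step is false, though. For $a=0$, $c=h$ you compare $N^h$ with $N$ (and, in the blow-up host, $r^h+t$ with $t^h+r$) and claim strict inequality ``using $h\ge1$''. This needs $h\ge2$: at $h=1$ the two sides agree for every choice of parameters. Your escape ``take $t\ge2$'' for $h-a=1$ does not help there either, because it needs $a\ge1$. No host can repair this. For $h=1$ both $S_{0,1}$ and $S_{1,0}$ are a single arc, so $\hom(S_{0,1},H)=\hom(S_{1,0},H)=|E(H)|$ for every $H$, and the lemma is false. It must be stated for $h\ge2$; the paper's proof has the same blind spot.

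For $h\ge2$ your proof is complete, and your accounting of the leaf contributions is more careful than the paper's. The paper writes $\hom(S_{h,0},H_{m,n})=m^h$, but the correct value is $m^h+n$, since the $n$ out-neighbours of the centre each have in-degree $1$. It also asserts that all out-degrees vanish in $H_{m,0}$. With $m=1$, the paper's comparison of $S_{h-1,1}$ against $S_{h,0}$ then reads $n$ versus $n+1$ and fails. Your requirement $t\ge2$ is exactly what repairs that subcase.
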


\begin{proof}
For any loopless digraph $H$ we have the standard identity
\begin{equation}
\label{eq:star-moment-identity}
\hom(S_{a,h-a},H)=\sum_{v\in V(H)} {\din}_H(v)^a\,{\dout}_H(v)^{h-a}.
\end{equation}
Indeed, the image of the centre can be chosen as any $v\in V(H)$; then each of
the $a$ incoming leaves can be mapped independently to an in-neighbour of $v$,
and each of the $h-a$ outgoing leaves independently to an out-neighbour of $v$.

For integers $m,n\ge 0$ define a loopless digraph $H_{m,n}$ as follows: its
vertex set is $\{v\}\cup U\cup W$ with $|U|=m$, $|W|=n$, and the only edges are
$u\to v$ for $u\in U$ and $v\to w$ for $w\in W$. Then
\[
{\din}_{H_{m,n}}(v)=m,\qquad {\dout}_{H_{m,n}}(v)=n,
\]
and every vertex in $U\cup W$ has either $\din=0$ or $\dout=0$. Hence, if
$1\le a\le h-1$ (so that both exponents in \eqref{eq:star-moment-identity} are
positive), only the centre $v$ contributes and we get
\begin{equation}
\label{eq:star-on-Hmn}
\hom(S_{a,h-a},H_{m,n})=m^a n^{h-a}
\qquad (m,n\ge 1).
\end{equation}

\smallskip
\noindent\emph{Case 1: $1\le a,c\le h-1$ and $a\neq c$.}
Fix $m,n\ge 1$. By \eqref{eq:star-on-Hmn},
\[
\frac{\hom(S_{a,h-a},H_{m,n})}{\hom(S_{c,h-c},H_{m,n})}
=\frac{m^a n^{h-a}}{m^c n^{h-c}}
=\Big(\frac{m}{n}\Big)^{a-c}.
\]
If $a>c$, then choosing $m\gg n$ yields
$\hom(S_{a,h-a},H_{m,n})>\hom(S_{c,h-c},H_{m,n})$, while choosing $m\ll n$ yields
the reverse inequality. The case $a<c$ is symmetric. This gives incomparability.

\smallskip
\noindent\emph{Case 2: one of $a,c$ lies in $\{0,h\}$ and $a\neq c$.}
By symmetry it suffices to treat $a=h$ (the purely in-star $S_{h,0}$) and
$c\in\{0,1,\dots,h-1\}$.

First, take the \emph{pure in-host} $H_{m,0}$ with $m\ge 1$. Then every vertex
has $\dout=0$. If $c<h$, then $h-c>0$, so \eqref{eq:star-moment-identity} gives
\[
\hom(S_{c,h-c},H_{m,0})=\sum_{v} \din(v)^c \dout(v)^{h-c}=0,
\]
whereas $\hom(S_{h,0},H_{m,0})=\sum_v \din(v)^h \ge m^h>0$ (the centre has
in-degree $m$). Hence
\[
\hom(S_{h,0},H_{m,0})>\hom(S_{c,h-c},H_{m,0}).
\]

Second, take $H_{m,n}$ with $m,n\ge 1$. If $1\le c\le h-1$, then by
\eqref{eq:star-on-Hmn} and $\hom(S_{h,0},H_{m,n})=m^h$ we have
\[
\frac{\hom(S_{c,h-c},H_{m,n})}{\hom(S_{h,0},H_{m,n})}
=\frac{m^c n^{h-c}}{m^h}=\Big(\frac{n}{m}\Big)^{h-c}.
\]
Choosing $n\gg m$ yields $\hom(S_{c,h-c},H_{m,n})>\hom(S_{h,0},H_{m,n})$.
If $c=0$, then $\hom(S_{0,h},H_{m,n})$ is at least $n^h$ (the centre has
out-degree $n$), so again $n\gg m$ yields $\hom(S_{0,h},H_{m,n})>m^h$.
Thus we also obtain a host where $S_{h,0}$ loses, proving incomparability.

The remaining boundary combinations ($a=0$ or $c=0$) follow by the same argument
with the \emph{pure out-host} $H_{0,n}$.
\end{proof}

\begin{remark}[Discreteness conjecture and reversal-symmetrised order]
\label{rem:discreteness-conjecture}
Our exhaustive computer search established complete incomparability 
for all directed trees with three arcs (Proposition~\ref{prop:3arc-incomparability}), 
but did not produce witnesses for many pairs of $4$-arc directed trees. 
Nonetheless, based on the theoretical results of this section we conjecture that $\preceq$ 
restricted to directed trees of any fixed size is discrete.

A potentially richer order is obtained by symmetrising with respect to 
arc reversal: for directed trees $T,S$ on $k$ vertices, define
\[
T \preceq_{\max} S
\quad:\Longleftrightarrow\quad
\hom(T,H) \le \max\bigl\{\hom(S,H),\, \hom(S^{\mathrm{rev}},H)\bigr\}
\quad\text{for all finite digraphs } H,
\]
where $S^{\mathrm{rev}}$ denotes $S$ with all arcs reversed.
Theorem~\ref{thm:main-directed-sidorenko} shows that every directed tree 
$T$ satisfies $T \preceq_{\max} S_{0,k-1}$ (since $S_{k-1,0} = S_{0,k-1}^{\mathrm{rev}}$), 
so the pure star is the unique maximum of $\preceq_{\max}$ 
(up to reversal). It would be interesting to determine the full 
structure of $\preceq_{\max}$, and in particular whether it admits 
non-trivial comparabilities beyond the star.
\end{remark}

\section{Extensions of Theorem~\ref{thm:main-directed-sidorenko}}\label{sec:strength}
\subsection{Tail truncation}
\label{subsec:tail}

In the undirected setting, Janson and Kurauskas~\cite{JK2023} prove a refined
tail version of Sidorenko's inequality: for a connected rooted graph $(H,o)$
on $h$ vertices, the homomorphisms whose root image has degree $\ge\Delta$
satisfy $\hom_\Delta(H,G)\le \sum_{v} d_v^{h-1}\ind\{d_v\ge\Delta\}$.
We establish a directed analogue using the total degree
$d(v):=\din(v)+\dout(v)$.

For a rooted directed tree $(T,o)$ and a finite digraph $H$, define
\[
\hom_\Delta(T,H)
:=\bigl|\bigl\{\varphi\in\mathrm{Hom}(T,H): d(\varphi(o))\ge\Delta\bigr\}\bigr|.
\]
More generally, for a weight vector
$\boldsymbol\alpha=(\alpha_x)_{x\in V(T)}\in[0,\infty)^{V(T)}$, set
\begin{equation}\label{eq:hom-alpha-dir}
\hom_{\Delta,\boldsymbol\alpha}(T,H)
:=\sum_{\varphi\in\mathrm{Hom}(T,H)}
  \ind\{d(\varphi(o))\ge\Delta\}\,
  \prod_{x\in V(T)} d(\varphi(x))^{\alpha_x}.
\end{equation}

\begin{theorem}[Directed tail inequality] \label{thm:directed-tail}
Let $(T,o)$ be a rooted directed tree on $k\ge 1$ vertices, let $H$ be a
finite digraph, $\Delta\ge 0$, and
$\boldsymbol\alpha\in[0,\infty)^{V(T)}$. Then
\begin{equation}\label{eq:directed-tail}
\hom_{\Delta,\boldsymbol\alpha}(T,H)
\;\le\; 4\sum_{v\in V(H)} d(v)^{k-1+|\boldsymbol\alpha|}\,\ind\{d(v)\ge\Delta\},
\end{equation}
where $|\boldsymbol\alpha|:=\sum_{x\in V(T)}\alpha_x$. In particular,
$\hom_\Delta(T,H)\le 2\sum_v d(v)^{k-1}\ind\{d(v)\ge\Delta\}$.
\end{theorem}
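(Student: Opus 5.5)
The plan is to reduce to the undirected tail inequality of Janson--Kurauskas~\cite{JK2023} by forgetting orientations, and to pay a bounded constant for the asymmetry this creates. We do not try to adapt the leaf-reallocation of Lemma~\ref{lem:leaf-homog}, because the truncation $\ind\{d(\varphi(o))\ge\Delta\}$ refers to the root and is not preserved when leaves are moved between branch vertices.

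\emph{Step 1 (forgetting orientation).} Let $G$ be the undirected multigraph on $V(H)$ with one edge for every arc of $H$. Then the degree of $v$ in $G$ is exactly $d(v)=\din(v)+\dout(v)$. Let $\bar T$ be the underlying undirected tree of $T$, with the same root $o$.
\begin{itemize}
\item A directed homomorphism $\varphi\colon T\to H$ sends each arc $x\to y$ of $T$ to a specific arc $\varphi(x)\to\varphi(y)$ of $H$.
\item Hence $\varphi$ determines a homomorphism of $\bar T$ into $G$ that records edge choices, i.e.\ it chooses an edge of $G$ for each edge of $\bar T$.
\item Distinct $\varphi$ give distinct such maps, and the vertex images are unchanged.
\end{itemize}
Therefore $\hom_{\Delta,\boldsymbol\alpha}(T,H)$ is at most the corresponding weighted, truncated count of $\bar T$ into the multigraph $G$, with the same weights $d(\cdot)^{\alpha_x}$ and the same indicator $\ind\{d(\cdot)\ge\Delta\}$. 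Orientation has now disappeared; only multigraph degrees remain.

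\emph{Step 2 (undirected weighted tail bound for multigraphs).} We re-prove the Janson--Kurauskas estimate for multigraphs, keeping track of constants. We induct on $k$ by stripping a leaf $x\neq o$ with neighbour $y$.
\begin{itemize}
\item Summing over the image of $x$ turns the weight $d(\varphi(x))^{\alpha_x}$ into $\sum_{w\sim\varphi(y)} d(w)^{\alpha_x}$, summed over edges of $G$ at $\varphi(y)$.
\item The summand is not pointwise bounded by a power of $d(\varphi(y))$. Instead we use weighted AM--GM along each edge, $d(u)^{a}d(w)^{b}\le d(u)^{a+b}+d(w)^{a+b}$, to transfer all weight to one endpoint.
\item We then use the double-counting identity $\sum_{\{u,w\}\in E(G)}\bigl(F(u)+F(w)\bigr)=\sum_v d(v)F(v)$, which holds verbatim for multigraphs.
\end{itemize}
Organising the induction around the root, each branch weight is pushed towards $o$. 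This raises the exponent at the root image by one for each removed vertex, plus the transported $\alpha$'s, and ends at $\sum_v d(v)^{k-1+|\boldsymbol\alpha|}\ind\{d(v)\ge\Delta\}$. The truncation is harmless because the root is never summed out.

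\emph{Step 3 (constants).} The factor $2$ should come from the edge transfer. In the undirected setting one exploits that each unordered edge is seen from both endpoints symmetrically. Here the map of Step 1 fixes which endpoint is the tail, so one application of the AM--GM splitting cannot be absorbed and costs a factor $2$; this gives $\hom_\Delta(T,H)\le 2\sum_v d(v)^{k-1}\ind\{d(v)\ge\Delta\}$. With general weights $\boldsymbol\alpha$, the non-integer exponents require a second, Young-type splitting at the point where weight is first moved onto the root, giving the total constant $4$.

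The main obstacle is Step 2, specifically the claim that the AM--GM transfer loses only an absolute constant and not a factor growing with $k$. A naive bound of $2$ per removed leaf would give $2^{k-1}$. I would first try to arrange the induction so that the transfer is applied once, to the whole forest hanging below the root's children, with the inductive hypothesis applied to the rooted subtrees without loss. The aim is a statement of the form: for the rooted subtree at $y$, the weighted count with $\varphi(y)=u$ is at most $\sum_{\text{walks}} d(\text{endpoint})^{\text{size}-1+\text{weight}}$, an exact identity-type bound with constant $1$. If that fails, the fallback is to accept the Janson--Kurauskas multigraph bound as a black box, verifying that their proof uses only the double-counting identity above, which holds for multigraphs. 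The asymmetry then only enters in Step 1's comparison.
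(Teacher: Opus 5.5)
Your route works and differs from the paper's, but only through your fallback: the main line of your Step~2 would not close, and your Step~3 is not needed. Step~1 plus the Janson--Kurauskas bound for multigraphs is already a complete proof.

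\emph{Why the reduction suffices.} Write $B:=A+A^{\top}$ for the adjacency matrix of your multigraph $G$. It is symmetric, has zero diagonal, and has row sums $d(v)$. Termwise you have
$\prod_{(x\to y)\in E(T)}A_{\varphi(x)\varphi(y)}\le\prod_{\{x,y\}\in E(\bar T)}B_{\varphi(x)\varphi(y)}$, with identical weights $d(\varphi(x))^{\alpha_x}$ and identical truncation. The JK double induction then runs verbatim for $B$:
\begin{itemize}
\item a zero-weight leaf at $p$ has exactly $d(\varphi(p))$ extensions;
\item two weighted non-roots are handled by H\"older;
\item in the remaining path case the walk-count matrix $N=B^{\ell}$ is symmetric, so Chebyshev folds $\sum N(x,y)f(x)g(y)$ into $\sum_x R(x)f(x)g(x)$ with no loss.
\end{itemize}
This gives the bound with constant $1$. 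That covers both the displayed inequality and the ``in particular'' bound. The latter does not follow from the constant-$4$ version by setting $\boldsymbol\alpha=0$, so it helps that your route gives it directly.

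\emph{What to drop.} The edge-wise AM--GM scheme in Step~2 does not close. The identity $\sum_{\{u,w\}}(F(u)+F(w))=\sum_v d(v)F(v)$ needs a summand depending on one endpoint only. That fails as soon as the rest of the tree hangs off one end. What actually does the job is JK's H\"older step plus the symmetric rearrangement. So the thing to check in JK is that it uses symmetry of $N$ and exact leaf-extension counts, not ``only the double-counting identity''. Step~3 is spurious: after Step~1 no orientation is left, nothing ``fixes the tail'', and no factor $2$ or $4$ arises.

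\emph{Comparison with the paper.} The paper runs the JK induction directly on the digraph, bounding $\din,\dout$ by $d$ in Case~1. In Case~3, $N(x,y)\neq N(y,x)$ for oriented paths, so it uses a one-sided rearrangement instead: it drops the cross term and bounds two diagonal terms separately. One has all weight moved to $o$; the other reroots the tree at $v$. This is where the $4$ comes from. To bound those two terms, the paper itself appeals to ``every directed homomorphism is an undirected one'' plus JK, which is your Step~1 used locally. Applying the comparison globally, as you do, never creates the asymmetry. It gives the sharper constant $1$ and so answers the question in Remark~\ref{rem:tail-constant} affirmatively.

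Your one-edge-per-arc multigraph is also what makes the comparison exact. In the simple underlying graph, reciprocal arcs push degrees below $d(v)$, so neither the weights $d^{\alpha_x}$ nor the truncation $\ind\{d(\varphi(o))\ge\Delta\}$ would transfer. The paper's route stays in directed language and would be the natural template for tails in separate in-/out-degrees. For total degree, the reduction is shorter and stronger.
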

\begin{proof}
The proof adapts the double induction of \cite[Theorem~3]{JK2023}
over the pair (number of vertices $k$, number of non-root vertices with
positive weight), with a single modification in Case~3 below.
We may assume $T$ is a tree and prove
\eqref{eq:directed-tail}. The base case $k=1$ is immediate.

\smallskip\noindent\emph{Case~1 (leaf with zero weight).}
Suppose $T$ has a leaf $w\neq o$ with $\alpha_w=0$, adjacent to~$p$.
Let $T':=T\setminus\{w\}$ and $\alpha'_p:=\alpha_p+1$,
$\alpha'_u:=\alpha_u$ for $u\neq p$.
The number of extensions from $p$ to $w$ is $\din(\varphi(p))$ or
$\dout(\varphi(p))$ (depending on arc orientation), each bounded by
$d(\varphi(p))$.
Hence $\hom_{\Delta,\boldsymbol\alpha}(T,H)\le
\hom_{\Delta,\boldsymbol\alpha'}(T',H)$ and the bound follows by
induction (fewer vertices).

\smallskip\noindent\emph{Case~2 (two weighted non-roots).}
If two distinct non-roots $v,w$ satisfy $\alpha_v,\alpha_w>0$:
H\"older's inequality with exponents
$(\alpha_v+\alpha_w)/\alpha_v$ and $(\alpha_v+\alpha_w)/\alpha_w$
applies exactly as in \cite[Case~2]{JK2023}, with vertex degrees
replaced by $d(\cdot)$.
This reduces to two subproblems with fewer positive non-root weights;
the weighted geometric--arithmetic mean preserves the constant.

\smallskip\noindent\emph{Case~3 (orientation of a path with at most one weighted non-root).}
If neither Case~1 nor Case~2 applies, then $T$ is an \emph{orientation of a path}
$o=x_0\text{---}x_1\text{---}\cdots\text{---}x_\ell=v$ in the underlying undirected
sense, and $\alpha_{x_i}=0$ for $0<i<\ell$, i.e.\ only the endpoints $o$ and $v$
may carry positive weight.

In the undirected setting, the argument in~\cite{JK2023} uses a Chebyshev
rearrangement based on the symmetry $N(x,y)=N(y,x)$, which need not hold for
directed paths. We proceed by a one-sided rearrangement inequality.

For $x,y\in V(H)$ define
\[
N(x,y):=\#\{\varphi\in\mathrm{Hom}(T,H):\varphi(o)=x,\ \varphi(v)=y\}.
\]
Set
\[
f(x):=d(x)^{\alpha_o}\ind\{d(x)\ge\Delta\},\qquad g(x):=d(x)^{\alpha_v}.
\]
Since $f$ and $g$ are non-decreasing functions of the scalar $d(\cdot)$, we have
\[
(f(x)-f(y))(g(x)-g(y))\ge 0 \qquad \text{for all }x,y\in V(H).
\]
Multiplying by $N(x,y)\ge 0$ and summing over $x,y$ gives
\[
0\le \sum_{x,y\in V(H)} N(x,y)\,(f(x)-f(y))(g(x)-g(y)).
\]
Expanding and rearranging, and writing
\[
R(x):=\sum_{y\in V(H)} N(x,y),\qquad L(y):=\sum_{x\in V(H)} N(x,y),
\]
we obtain
\begin{equation}\label{eq:rearr}
\sum_{x,y} N(x,y)\,f(x)\,g(y)
+\sum_{x,y} N(x,y)\,f(y)\,g(x)
\;\le\;
\sum_{x} R(x)\,f(x)\,g(x)
+\sum_{y} L(y)\,f(y)\,g(y).
\end{equation}
Observe that
\[
\sum_{x,y} N(x,y)\,f(x)\,g(y)=\hom_{\Delta,\boldsymbol\alpha}(T,H)
\]
by definition, since for homomorphisms with $\varphi(o)=x$ and $\varphi(v)=y$
the weight contribution is exactly $f(x)g(y)$. Moreover, the second term on the
left-hand side of~\eqref{eq:rearr} is nonnegative, as all summands are.
We set
\[
A:=\sum_{x} R(x)\,f(x)\,g(x),
\qquad
B:=\sum_{y} L(y)\,f(y)\,g(y),
\]
and proceed by bounding $A$ and $B$ by the induction hypothesis.

\smallskip
\noindent\emph{Bounding $A$.}
Define a new weight vector $\boldsymbol\alpha'$ by
\[
\alpha'_o:=\alpha_o+\alpha_v,\qquad \alpha'_w:=0\ \text{for all }w\neq o.
\]
Then
\[
A=\sum_x R(x)\,d(x)^{\alpha_o+\alpha_v}\ind\{d(x)\ge\Delta\}
=\hom_{\Delta,\boldsymbol\alpha'}(T,H).
\]
Since $|\boldsymbol\alpha'|=|\boldsymbol\alpha|$, $T$ has $k$ vertices, and
$\boldsymbol\alpha'$ has no positive non-root weights, the induction
hypothesis (at the level of the ``in particular'' bound, which holds with
constant~$2$ since every directed homomorphism is an undirected one and the
undirected tail inequality of~\cite{JK2023} gives constant~$1$) yields
\[
A\le 2\sum_{z\in V(H)} d(z)^{k-1+|\boldsymbol\alpha|}\ind\{d(z)\ge\Delta\}.
\]

\smallskip
\noindent\emph{Bounding $B$.}
Reroot $T$ at $v$ and define $\boldsymbol\alpha''$ on $(T,v)$ by
\[
\alpha''_v:=\alpha_o+\alpha_v,\qquad \alpha''_w:=0\ \text{for all }w\neq v.
\]
Since $L(y)$ counts homomorphisms with $\varphi(v)=y$, we have
\[
B=\sum_y L(y)\,d(y)^{\alpha_o+\alpha_v}\ind\{d(y)\ge\Delta\}
=\hom_{\Delta,\boldsymbol\alpha''}((T,v),H).
\]
Applying the induction hypothesis to the rooted tree $(T,v)$ gives
\[
B\le 2\sum_{z\in V(H)} d(z)^{k-1+|\boldsymbol\alpha|}\ind\{d(z)\ge\Delta\}.
\]

Finally, combining these bounds with~\eqref{eq:rearr} and dropping the
non-negative cross term yields
\[
\hom_{\Delta,\boldsymbol\alpha}(T,H)
\le A+B
\le 4\sum_{z\in V(H)} d(z)^{k-1+|\boldsymbol\alpha|}\ind\{d(z)\ge\Delta\},
\]
which is~\eqref{eq:directed-tail}.
\end{proof}

\begin{remark}\label{rem:tail-constant}
In the undirected case, the constant is~$1$
(cf.~\cite[Theorem~3]{JK2023}), thanks to the endpoint symmetry of
undirected paths. The factor~$4$ in~\eqref{eq:directed-tail} arises
because directed paths lack this symmetry. It would be interesting to
determine whether this factor can be removed.
\end{remark}
\subsection{Rooted message passing for pointwise bounds}
\label{subsec:rooted-message-passing}

For a directed graph $G=(V,E)$, write
\[
(A_{\mathrm{out}} f)(v):=\sum_{v\to u} f(u),
\qquad
(A_{\mathrm{in}} f)(v):=\sum_{u\to v} f(u),
\qquad v\in V,
\]
for the (directed) out-/in-neighbourhood summation operators.

Let $(T,o)$ be a rooted directed tree. For each vertex $x\in V(T)$, let
\[
\mathrm{ch}^{\mathrm{out}}(x):=\{c: x\to c \text{ is an arc of }T\},\qquad
\mathrm{ch}^{\mathrm{in}}(x):=\{c: c\to x \text{ is an arc of }T\},
\]
denote the children of $x$ oriented away from / towards $x$.
(Thus every neighbour $c$ of $x$ that lies below $x$ in the rooted tree is in exactly one of these sets.)

\begin{proposition}[Rooted message passing identity]
\label{prop:rooted-message-passing}
Let $(T,o)$ be a rooted directed tree. There exist functions
\(
F_x:V(G)\to \mathbb{R}_{\ge0}
\) for $x\in V(T)$ such that
\[
\hom((T,o),(G,v)) \;=\; F_o(v),\qquad v\in V(G),
\]
and they are computed recursively as follows:
\begin{itemize}
\item If $x$ is a leaf of $T$, then $F_x\equiv 1$.
\item Otherwise, for all $v\in V(G)$,
\begin{equation}
\label{eq:message-passing-recursion}
F_x(v)
=
\prod_{c\in \mathrm{ch}^{\mathrm{out}}(x)} (A_{\mathrm{out}}F_c)(v)
\;\cdot\;
\prod_{c\in \mathrm{ch}^{\mathrm{in}}(x)} (A_{\mathrm{in}}F_c)(v).
\end{equation}
\end{itemize}
\end{proposition}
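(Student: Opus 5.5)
The plan is to define $F_x$ directly as a rooted homomorphism count for the subtree below $x$, and then verify the recursion by a product decomposition. For $x\in V(T)$, let $T_x$ denote the subtree of $(T,o)$ consisting of $x$ and all its descendants, rooted at $x$, and set
\[
F_x(v):=\#\{\varphi\in\mathrm{Hom}(T_x,G):\varphi(x)=v\}=\hom((T_x,x),(G,v)).
\]
Since $T_o=T$, the identity $\hom((T,o),(G,v))=F_o(v)$ holds by definition. It remains to check that these $F_x$ satisfy the stated recursion, which also shows they are the functions produced by it. Throughout, "leaf" is read as a vertex with no children in the rooted tree; a root of degree one is treated as internal with a single child.

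\emph{Base case.} If $x$ has no children, then $T_x$ is the single vertex $x$. There is exactly one map with $\varphi(x)=v$, so $F_x\equiv1$.

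\emph{Inductive step.} For an internal $x$ I will induct on the height of $T_x$. Removing $x$ from $T_x$ splits it into the disjoint subtrees $T_c$, one for each child $c\in\mathrm{ch}^{\mathrm{out}}(x)\cup\mathrm{ch}^{\mathrm{in}}(x)$. Every arc of $T_x$ is of one of two kinds: it lies inside some $T_c$, or it is the arc between $x$ and a child $c$. This arc is $x\to c$ if $c\in\mathrm{ch}^{\mathrm{out}}(x)$ and $c\to x$ if $c\in\mathrm{ch}^{\mathrm{in}}(x)$.

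Fix $\varphi(x)=v$. A map $\varphi:V(T_x)\to V(G)$ is then a homomorphism exactly when, for each child $c$, two conditions hold:
\begin{itemize}
\item the image $u=\varphi(c)$ satisfies $v\to u\in E(G)$ in the out case, or $u\to v\in E(G)$ in the in case;
\item the restriction $\varphi|_{T_c}$ is a homomorphism with $\varphi(c)=u$.
\end{itemize}
These conditions for different children involve disjoint vertex sets of $T_x$. So the count factorises as a product over children. For a single out-child $c$, the factor is $\sum_{u:\,v\to u}F_c(u)=(A_{\mathrm{out}}F_c)(v)$, and for an in-child it is $(A_{\mathrm{in}}F_c)(v)$. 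Applying the induction hypothesis to each $T_c$ identifies these inner counts with the recursively defined $F_c$, which gives \eqref{eq:message-passing-recursion}.

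I do not expect a genuine obstacle. The only point needing care is the bijection in the factorisation step: one has to check that every constraint on a homomorphism of $T_x$ is either internal to one $T_c$ or is the single arc joining $x$ to $c$. This holds because $T$ is a tree, so there are no further arcs between distinct subtrees. No simplicity assumption on $G$ beyond its arc set is needed.
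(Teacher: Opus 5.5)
Your proposal is correct and follows the same route as the paper. Both fix the image of $x$, use that the subtrees below distinct children are disjoint and joined to $x$ only by the single parent--child arc, and factorise the count into $(A_{\mathrm{out}}F_c)(v)$ and $(A_{\mathrm{in}}F_c)(v)$. Your explicit definition $F_x(v)=\hom((T_x,x),(G,v))$ and your reading of ``leaf'' as a childless vertex make precise what the paper leaves implicit: under the paper's earlier total-degree-one definition, a degree-one root would wrongly receive $F_o\equiv 1$.
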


\begin{proof}
Fix $v\in V(G)$. Every homomorphism $\varphi$ with $\varphi(o)=v$ is uniquely determined by the images of the rooted subtrees below $o$.
At a general vertex $x$, once $\varphi(x)$ is fixed, each outgoing child $c\in\mathrm{ch}^{\mathrm{out}}(x)$ may be mapped to an out-neighbour of $\varphi(x)$, and each incoming child $c\in\mathrm{ch}^{\mathrm{in}}(x)$ to an in-neighbour, independently across different children (the subtrees are disjoint).
This yields \eqref{eq:message-passing-recursion} by conditioning on $\varphi(x)$ and summing over admissible images of the children; leaves contribute the constant $1$.
\end{proof}

For $p\in[1,\infty)$ and $f:V(G)\to\mathbb{R}_{\ge0}$ define the local $p$-sums
\[
\|f\|_{\mathrm{out},p}(v):=\Big(\sum_{v\to u} f(u)^p\Big)^{1/p},
\qquad
\|f\|_{\mathrm{in},p}(v):=\Big(\sum_{u\to v} f(u)^p\Big)^{1/p}.
\]
Then for every $p\in[1,\infty)$ and $v\in V(G)$, Hölder gives the pointwise bounds
\begin{equation}
\label{eq:local-holder}
(A_{\mathrm{out}}f)(v)\le \dout(v)^{1-1/p}\,\|f\|_{\mathrm{out},p}(v),
\qquad
(A_{\mathrm{in}}f)(v)\le \din(v)^{1-1/p}\,\|f\|_{\mathrm{in},p}(v).
\end{equation}

\begin{proposition}[Pointwise Hölder envelope for rooted homomorphism counts]
\label{prop:pointwise-holder-envelope}
Let $(T,o)$ be a rooted directed tree and let $G$ be a directed graph.
Choose exponents $p_{x\to c}\in[1,\infty)$ for every arc $x\to c$ of $T$ (equivalently for each parent--child relation in the rooted tree).
Define $F_x$ as in Proposition~\ref{prop:rooted-message-passing}.
Then for every $v\in V(G)$ and every non-leaf $x\in V(T)$,
\begin{equation}
\begin{aligned}
\label{eq:envelope-step}
F_x(v)
&\le
\Bigg(
\prod_{c\in \mathrm{ch}^{\mathrm{out}}(x)}
\dout(v)^{1-\nicefrac{1}{p_{x\to c}}}
\,
\|F_c\|_{\mathrm{out},p_{x\to c}}(v)
\Bigg)
\cdot
\Bigg(
\prod_{c\in \mathrm{ch}^{\mathrm{in}}(x)}
\din(v)^{1-\nicefrac{1}{p_{c\to x}}}
\,
\|F_c\|_{\mathrm{in},p_{c\to x}}(v)
\Bigg).
\end{aligned}
\end{equation}
In particular, applying \eqref{eq:envelope-step} iteratively from the leaves to the root yields a completely explicit pointwise upper bound on
\(\hom((T,o),(G,v))=F_o(v)\)
in terms of $\din,\dout$ and nested local $p$-sums over directed neighbourhoods of radius at most $\mathrm{ht}(T)$.
\end{proposition}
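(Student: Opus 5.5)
The plan is to start from the recursion \eqref{eq:message-passing-recursion} of Proposition~\ref{prop:rooted-message-passing} and bound each factor separately. Fix a non-leaf $x\in V(T)$ and a vertex $v\in V(G)$. Then $F_x(v)$ is a finite product of terms $(A_{\mathrm{out}}F_c)(v)$ over $c\in\mathrm{ch}^{\mathrm{out}}(x)$ and $(A_{\mathrm{in}}F_c)(v)$ over $c\in\mathrm{ch}^{\mathrm{in}}(x)$. Every $F_c$ is nonnegative, since it counts homomorphisms, so every factor is nonnegative. A product of nonnegative factors can therefore be bounded termwise by nonnegative upper bounds.

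For each outgoing child $c$, apply the local H\"older bound \eqref{eq:local-holder} with $f=F_c$ and $p=p_{x\to c}$. This means H\"older on the finite sum over out-neighbours $u$ of $v$, with conjugate exponents $p_{x\to c}$ and $p_{x\to c}/(p_{x\to c}-1)$, applied to $F_c(u)\cdot 1$. It gives
\[
(A_{\mathrm{out}}F_c)(v)\le \dout(v)^{1-1/p_{x\to c}}\,\|F_c\|_{\mathrm{out},p_{x\to c}}(v).
\]
The case $p=1$ is trivially an equality, and if $\dout(v)=0$ both sides vanish. The incoming children are handled identically with $\din$, $A_{\mathrm{in}}$ and exponent $p_{c\to x}$. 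Multiplying all these bounds gives \eqref{eq:envelope-step}.

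For the ``in particular'' statement, I would argue by induction on the height of the rooted subtree at $x$. The induction yields explicit functions $\Phi_x\ge F_x$ pointwise, defined by $\Phi_x\equiv 1$ at leaves and by the right-hand side of \eqref{eq:envelope-step} with $F_c$ replaced by $\Phi_c$. This needs one observation: $\|\cdot\|_{\mathrm{out},p}(v)$ and $\|\cdot\|_{\mathrm{in},p}(v)$ are monotone in $f$ for nonnegative $f$, so $F_c\le\Phi_c$ pointwise implies $\|F_c\|_{\bullet,p}(v)\le\|\Phi_c\|_{\bullet,p}(v)$. Unwinding the recursion, $\Phi_o(v)$ involves only degrees of vertices at directed distance at most $\mathrm{ht}(T)$ from $v$. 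Consequently $F_o(v)=\hom((T,o),(G,v))\le\Phi_o(v)$.

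No step is a genuine obstacle. The only care needed is the bookkeeping in the iteration: one must check monotonicity before substituting envelopes into the local $p$-sums, and keep in-/out-orientations attached to the correct arc exponents.
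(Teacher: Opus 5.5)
Your proposal is correct and follows the paper's proof: apply the local H\"older bound \eqref{eq:local-holder} to each factor of the recursion \eqref{eq:message-passing-recursion} with the matching arc exponent, multiply, and then iterate from the leaves to the root. Your explicit remark that $\|\cdot\|_{\mathrm{out},p}(v)$ and $\|\cdot\|_{\mathrm{in},p}(v)$ are monotone in nonnegative $f$, so that envelopes can be substituted into the local $p$-sums, fills in a step the paper leaves implicit in ``iterating this one-step bound''.
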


\begin{proof}
Fix $x$ and $v\in V(G)$. Starting from the recursion \eqref{eq:message-passing-recursion}, apply \eqref{eq:local-holder} to each factor
$(A_{\mathrm{out}}F_c)(v)$ and $(A_{\mathrm{in}}F_c)(v)$ with the corresponding exponent $p_{x\to c}$ (resp.\ $p_{c\to x}$).
Multiplying the resulting bounds gives \eqref{eq:envelope-step}.
The final statement follows by iterating this one-step bound along the rooted tree.
\end{proof}

\begin{corollary}[Uniform-$p$ envelope]
\label{cor:uniform-p-envelope}
In Proposition~\ref{prop:pointwise-holder-envelope}, choose a single exponent $p\in[1,\infty)$ and set
$p_{x\to c}\equiv p$ for every parent--child arc of $T$.
Then for every non-leaf $x$ and every $v\in V(G)$,
\[\begin{split}
F_x(v)
\le
& \dout(v)^{(1-1/p)\,|\mathrm{ch}^{\mathrm{out}}(x)|}
\din(v)^{(1-1/p)\,|\mathrm{ch}^{\mathrm{in}}(x)|}\\
& \times\Bigg(\prod_{c\in \mathrm{ch}^{\mathrm{out}}(x)} \|F_c\|_{\mathrm{out},p}(v)\Bigg)
\Bigg(\prod_{c\in \mathrm{ch}^{\mathrm{in}}(x)} \|F_c\|_{\mathrm{in},p}(v)\Bigg).
\end{split}
\]
In particular, $\hom((T,o),(G,v))=F_o(v)$ is bounded by an expression involving only $\din$,$\dout$ and iterated local $\ell^p$-neighbourhood norms up to depth $\mathrm{ht}(T)$.
\end{corollary}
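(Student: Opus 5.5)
The corollary is a direct specialisation of Proposition~\ref{prop:pointwise-holder-envelope}, so the plan is simply to substitute the constant exponent choice into \eqref{eq:envelope-step} and collect terms. Fix a non-leaf $x\in V(T)$ and $v\in V(G)$, and set $p_{x\to c}=p$ for every $c\in\mathrm{ch}^{\mathrm{out}}(x)$ and $p_{c\to x}=p$ for every $c\in\mathrm{ch}^{\mathrm{in}}(x)$. This choice is admissible because Proposition~\ref{prop:pointwise-holder-envelope} allows arbitrary exponents in $[1,\infty)$ on each arc, independently of one another.

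In the first product of \eqref{eq:envelope-step}, each factor then contains the same power $\dout(v)^{1-1/p}$. Pulling these scalars out of the product over $c\in\mathrm{ch}^{\mathrm{out}}(x)$ gives $\dout(v)^{(1-1/p)|\mathrm{ch}^{\mathrm{out}}(x)|}$ times $\prod_{c}\|F_c\|_{\mathrm{out},p}(v)$. The second product is handled in the same way and yields $\din(v)^{(1-1/p)|\mathrm{ch}^{\mathrm{in}}(x)|}\prod_{c}\|F_c\|_{\mathrm{in},p}(v)$. Multiplying the two expressions gives exactly the displayed inequality. Note that if one of the child sets is empty, the corresponding power is $0$ and the empty product equals $1$. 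This is consistent with the recursion \eqref{eq:message-passing-recursion}, whose corresponding factor is also an empty product, so no convention such as $0^0$ causes trouble.

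For the ``in particular'' statement, we iterate from the leaves upward, as in the final assertion of Proposition~\ref{prop:pointwise-holder-envelope}. For a leaf $c$ we have $F_c\equiv1$, so $\|F_c\|_{\mathrm{out},p}(v)=\dout(v)^{1/p}$ and $\|F_c\|_{\mathrm{in},p}(v)=\din(v)^{1/p}$. For a non-leaf child $c$ we substitute the already established bound for $F_c$ inside the local $p$-sum, which is legitimate because $f\mapsto\|f\|_{\mathrm{out},p}(v)$ and $f\mapsto\|f\|_{\mathrm{in},p}(v)$ are monotone in $f\ge0$.

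An induction on the height of the subtree rooted at $x$ then shows the following: the resulting bound for $F_x(v)$ involves only $\din$, $\dout$ and nested local $\ell^p$-norms over directed neighbourhoods of radius at most the height of that subtree. Taking $x=o$ gives the claim for $F_o(v)=\hom((T,o),(G,v))$ with depth $\mathrm{ht}(T)$.

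There is no genuine obstacle in this argument. The only point needing care is the monotonicity used when nesting the bounds, and this holds because all functions involved are nonnegative.
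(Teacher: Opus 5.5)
Your proposal is correct and follows the paper's route. The paper gives no separate proof: it treats the corollary as the immediate specialisation of Proposition~\ref{prop:pointwise-holder-envelope} to constant exponents, pulling the common factors $\dout(v)^{1-1/p}$ and $\din(v)^{1-1/p}$ out of the products, and then iterates from the leaves to the root, for which your monotonicity remark supplies exactly the needed justification.

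One small correction to your aside: when a child set is empty and the corresponding degree of $v$ vanishes, the factor $\dout(v)^{0}$ (or $\din(v)^{0}$) is literally $0^0$. It must be read as $1$, i.e.\ as the empty product, so this is the standard convention $0^0=1$ rather than something the argument avoids.
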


\begin{remark}[Relation to Theorem~\ref{thm:main-directed-sidorenko}]
\label{rem:envelope-vs-sidorenko}
Proposition~\ref{prop:pointwise-holder-envelope} is a \emph{pointwise} (in $v$) estimate, but it necessarily involves local neighbourhood aggregates (nested $p$-sums) and cannot, in general, be reduced to a function of $\din(v),\dout(v)$ alone. By contrast, the Sidorenko-type inequality in Theorem~\ref{thm:main-directed-sidorenko} is a \emph{global} estimate obtained after averaging over $v$ (and applying Hölder globally), which is precisely why it collapses to mixed in-/out-degree moments. The following example illustrates this. Let $T$ be the directed path $o\to x\to y$, rooted at $o$.
By Proposition~\ref{prop:rooted-message-passing}, for every $v\in V(G)$,
\[
\hom((T,o),(G,v))
=
\sum_{v\to u}\ \sum_{u\to w} 1
=
\sum_{u\in N^{\mathrm{out}}(v)} \dout(u),
\]
where $N^{\mathrm{out}}(v)$ denotes the out-neighbourhood of $v$. Note that $\hom((T,o),(G,v))$ simply is the number of directed walks of length $2$ starting at $v$.

In particular, $\hom((T,o),(G,v))$ cannot be bounded in general by a function of $\din(v)$ and $\dout(v)$ alone: even when $\dout(v)$ is fixed, the quantity above may be arbitrarily large if $v$ has an out-neighbour of very large out-degree.

On the other hand, the uniform-$p$ envelope from Corollary~\ref{cor:uniform-p-envelope} gives the pointwise bound
\[
\hom((T,o),(G,v))
\le
\dout(v)^{1-1/p}\,
\Big(\sum_{v\to u} \dout(u)^p\Big)^{1/p},
\qquad p\in[1,\infty),
\]
which interpolates between the crude estimate
$\hom((T,o),(G,v))\le \sum_{v\to u}\dout(u)$ at $p=1$
and progressively stronger control of large out-neighbour degrees for larger $p$.
\end{remark}

\section{Applications} \label{sec:applications}
\subsection{From degree-moment convergence to LLNs for directed subgraph counts}
\label{subsec:kurauskas-directed}

The main motivation for this paper is a directed analogue of a result of Kurauskas
\cite{Kurau22} showing that, under local weak convergence,
convergence of suitable degree moments is essentially equivalent to convergence of connected
subgraph counts. In the undirected case, the crucial input is Sidorenko's inequality
bounding $\hom(H,G)$ by a star count; see Theorem~2.2 in \cite{Kurau22}.

\begin{proposition}[Moment domination and uniform integrability]
\label{prop:moment-domination}
Let $(G_n)$ be a sequence of finite digraphs with uniformly chosen
root vertex $v_n$. Suppose that
\[
\sup_n \E\big[\din(v_n)^{h-1}\big] < \infty
\qquad\text{and}\qquad
\sup_n \E\big[\dout(v_n)^{h-1}\big] < \infty.
\]
Then for every directed tree $T$ on at most $h$ vertices,
the sequence $n^{-1}\hom(T,G_n)$ is uniformly integrable.
\end{proposition}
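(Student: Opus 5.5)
The plan is to read the statement literally. For each fixed $T$, the quantities $n^{-1}\hom(T,G_n)$ form a sequence of nonnegative reals, where $n=|V(G_n)|$. A family of deterministic (constant) random variables is uniformly integrable exactly when it is bounded: if $\sup_n x_n\le K_0$, then $\E[x_n\ind\{x_n>K\}]=0$ for all $n$ once $K>K_0$. So it suffices to show
\[
\sup_n \tfrac1n\hom(T,G_n)<\infty
\]
for every directed tree $T$ on $k\le h$ vertices, including the borderline case $k=h$.

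\emph{Step 1 (reduction to stars).} Apply Theorem~\ref{thm:main-directed-sidorenko} with host $H=G_n$:
\[
\hom(T,G_n)\le \max\{\hom(S_{0,k-1},G_n),\hom(S_{k-1,0},G_n)\}.
\]
By Lemma~\ref{lem:holder-star}, the pure star counts are $\sum_v \din(v)^{k-1}$ and $\sum_v\dout(v)^{k-1}$, with the exponents assigned according to which pure star is which. Dividing by $n$ and using that $v_n$ is uniform on $V(G_n)$ gives
\[
\tfrac1n\hom(T,G_n)\le \max\bigl\{\E[\din(v_n)^{k-1}],\,\E[\dout(v_n)^{k-1}]\bigr\}.
\]
The case $k=1$ is trivial, since then $\hom(T,G_n)=n$.

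\emph{Step 2 (lowering the exponent).} For $1\le k\le h$ and any integer $x\ge0$, we have $x^{k-1}\le 1+x^{h-1}$: if $x\ge1$ the left side is at most $x^{h-1}$, and if $x=0$ it is at most $1$. Hence each expectation above is at most $1+\sup_n\E[\din(v_n)^{h-1}]$ or $1+\sup_n\E[\dout(v_n)^{h-1}]$, both finite by hypothesis. This gives a bound uniform in $n$, valid for every $k\le h$ and in particular for $k=h$ with no extra assumption, and uniform integrability follows as explained above.

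The only delicate point, and the one to state carefully, is the interpretation. The statement concerns the sequence $n^{-1}\hom(T,G_n)$ itself, not the rooted random variables $\hom((T,o),(G_n,v_n))$ whose mean it is. For the literal sequence, uniform integrability coincides with boundedness, and Steps 1--2 give that boundedness directly through the main theorem. I would add a sentence noting that the same bound shows that the rooted counts have uniformly bounded first moments, which is what the later applications use.
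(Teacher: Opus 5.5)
Your proposal is correct and matches the paper's proof: apply Theorem~\ref{thm:main-directed-sidorenko} with host $G_n$, identify the pure star counts with $n\,\E[\din(v_n)^{k-1}]$ and $n\,\E[\dout(v_n)^{k-1}]$, and conclude from the resulting uniform bound, which for a deterministic sequence is exactly uniform integrability. The only difference is that you make explicit the step from exponent $k-1$ to $h-1$ for trees with $k<h$ vertices, including the trivial case $k=1$; the paper's one-line argument leaves this implicit, and its displayed bound with exponent $h-1$ can fail there (e.g.\ for edgeless $G_n$), though the conclusion is unaffected.
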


\begin{proof}
By Theorem~\ref{thm:main-directed-sidorenko},
\[
n^{-1}\hom(T,G_n)
\le
\max\bigl\{\E[\din(v_n)^{h-1}],\,\E[\dout(v_n)^{h-1}]\bigr\},
\]
and the right-hand side is bounded by assumption.
\end{proof}

We work with the standard Benjamini--Schramm setup, but for rooted \emph{directed} graphs:
let $\mathcal G^\to_\ast$ be the space of rooted, connected, locally finite digraphs modulo rooted
digraph isomorphism, equipped with the usual local metric $d_{\mathrm{loc}}$ via rooted radius-$r$
balls (defined in the underlying undirected graph, i.e.\ induced subdigraph on vertices within
undirected distance $\le r$ from the root, with inherited arc orientations).
For a finite digraph $G$, let $v^\ast$ be a uniform vertex of $G$ and view $(G,v^\ast)$ as a random
element of $\mathcal G^\to_\ast$. We write
\[
(G_n,v_n^\ast)\ \xRightarrow[n\to\infty]{d_{\mathrm{loc}}}\ G^\ast
\]
for local weak convergence in distribution.

\begin{remark}
 Note that in the directed case there are multiple natural ways to define neighbourhoods, and therefore the local topology, depending on the particular application, see e.g.\ \cite[Remark 1.9]{vdHP2025} for a more detailed discussion. We use the most basic notion in which the directions of edges are simply thought of as marks and a digraph thus becomes an instance of a random network in the sense of Aldous and Lyons \cite{AldLyo07}.
\end{remark}

For a rooted digraph $F=(F,r(F))$ and a rooted (finite or infinite) digraph $(G,v)$, let
$\textrm{emb}^\bullet(F,G,v)$ be the number of \emph{injective} rooted homomorphisms
$\varphi:V(F)\to V(G)$ with $\varphi(r(F))=v$ (i.e.\ embeddings preserving all arc orientations).
For an unrooted digraph $H$, let $\mathcal R(H)$ denote the set of rooted versions obtained by
choosing a root in $V(H)$.

\begin{theorem}[Sufficient criterion for subgraph LLN]
\label{thm:directed-kurauskas}
Fix an integer $h\ge2$ and let $\{G_n\}_{n\ge1}$ be a sequence of finite
digraphs with $n_1(n):=|V(G_n)|\to\infty$. Let $v_n^\ast$ be uniform in
$V(G_n)$ and suppose
\[
(G_n,v_n^\ast)\ \xRightarrow[n\to\infty]{d_{\mathrm{loc}}}\ (G^\ast,r^\ast), 
\qquad r^\ast:=\textrm{root}(G^\ast).
\]
Write $D_n:=d_{G_n}(v_n^\ast)={\din}_{G_n}(v_n^\ast)+{\dout}_{G_n}(v_n^\ast)$
for the total degree of the uniform vertex, and
$D_\ast:=d_{G^\ast}(r^\ast)$ for the total degree of the root of the limit.
Assume
\begin{equation}\label{eq:dir-moment-finite-new}
\E\big[D_\ast^{h-1}\big]\ <\ \infty.
\end{equation}
Then the following are equivalent:
\begin{enumerate}[label=(\roman*)]
\item $\E\big[D_n^{h-1}\big]\longrightarrow\E\big[D_\ast^{h-1}\big]$.
\item The family $\{D_n^{h-1}\}_{n\ge1}$ is uniformly integrable.
\item For every connected digraph $H$ on $h$ vertices and every rooted
version $H^\bullet\in\mathcal R(H)$,
\begin{equation}\label{eq:dir-subgraph-conv-new}
n_1(n)^{-1}\,\textrm{emb}(H,G_n)\ \longrightarrow\
\E\big[\textrm{emb}^\bullet(H^\bullet,G^\ast,r^\ast)\big].
\end{equation}
\end{enumerate}
\end{theorem}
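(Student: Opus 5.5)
The plan is to follow Kurauskas' scheme for Theorem~2.2 in \cite{Kurau22}, with the directed tail inequality (Theorem~\ref{thm:directed-tail}) playing the role of the undirected Janson--Kurauskas bound.

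\textbf{(i)$\Leftrightarrow$(ii).} Local weak convergence gives $D_n\Rightarrow D_\ast$ in distribution, since the root degree is a continuous function of the radius-$1$ ball. The equivalence then follows from the standard fact for nonnegative variables converging in law: $\E[D_n^{h-1}]\to\E[D_\ast^{h-1}]<\infty$ holds if and only if $\{D_n^{h-1}\}$ is uniformly integrable. This uses Fatou's lemma together with truncation at a continuity point $M$ of the limit law.

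\textbf{(ii)$\Rightarrow$(iii).} Fix a connected $H$ on $h$ vertices and a root $H^\bullet$.
\begin{itemize}
\item First, $n_1^{-1}\emb(H,G_n)=\E[\emb^\bullet(H^\bullet,G_n,v_n^\ast)]$, since every embedding is counted once via the image of the root.
\item Second, $\emb^\bullet(H^\bullet,\cdot,\cdot)$ depends only on the radius-$(h-1)$ ball. Hence the truncated functional $\emb^\bullet\wedge M$ is bounded and continuous in $d_{\mathrm{loc}}$, and its expectation converges.
\item It therefore suffices to show uniform integrability of $X_n:=\emb^\bullet(H^\bullet,G_n,v_n^\ast)$.
\end{itemize}
To do this, choose a spanning tree $T$ of $H$ rooted at the root of $H^\bullet$, keeping the orientations of $H$. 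Then $X_n\le \hom^\bullet((T,o),(G_n,v_n^\ast))$. Summing over the root images and using the ``in particular'' form of Theorem~\ref{thm:directed-tail} gives
\[
\E\bigl[X_n\ind\{D_n\ge\Delta\}\bigr]\le n_1^{-1}\hom_\Delta(T,G_n)\le 2\,\E\bigl[D_n^{h-1}\ind\{D_n\ge\Delta\}\bigr].
\]
However, the event $\{X_n\ge K\}$ is not contained in $\{D_n\ge\Delta\}$, because a low-degree root can still have a high-degree neighbour. I expect this to be the \textbf{main obstacle}. I would handle it as in Kurauskas by induction on $h$ over rooted subtrees, or more directly as follows.
\begin{itemize}
\item Bound $\E[X_n\ind\{\max_{x}d(\varphi(x))\ge\Delta\}]$ by summing over which tree vertex $x$ has the large image. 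Rerooting $T$ at $x$ and applying the tail inequality to $(T,x)$ gives at most $2h\,\E[D_n^{h-1}\ind\{D_n\ge\Delta\}]$.
\item Homomorphisms in which all images have degree $<\Delta$ number at most $\Delta^{h-1}$ per root.
\end{itemize}
Hence $\E[X_n\ind\{X_n>\Delta^{h-1}\}]\le 2h\sup_n\E[D_n^{h-1}\ind\{D_n\ge\Delta\}]$, which tends to $0$ as $\Delta\to\infty$ by (ii). This yields uniform integrability and hence (iii). One also has to check that the limit is finite; this follows from Fatou's lemma and the same bound applied in the limit.

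\textbf{(iii)$\Rightarrow$(i).} Apply (iii) with $H=S_{a,h-1-a}$ rooted at the centre, for each $a\in\{0,\dots,h-1\}$. Summing $\emb^\bullet$ over all such stars, with multiplicities, counts the injective maps of $h-1$ distinct labelled neighbours of the root. This gives
\[
\E\Bigl[(D_n)_{h-1}\Bigr]\to\E\Bigl[(D_\ast)_{h-1}\Bigr]
\]
for the falling factorial. The subtlety is that a neighbour joined by arcs in both directions is double-counted by $\din+\dout$. I would treat this by noting that the digon counts are also connected pattern counts on at most $h$ vertices, handled via lower-order patterns inside (iii).

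Finally, convergence of the falling-factorial moment together with convergence in distribution implies uniform integrability of $(D_n)_{h-1}$, and hence of $D_n^{h-1}$. The lower-order terms are uniformly integrable by domination, so (ii) holds, and with it (i).
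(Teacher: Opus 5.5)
\paragraph{Parts that agree with the paper.} Your (i)$\Leftrightarrow$(ii) and (ii)$\Rightarrow$(iii) follow the paper's route. The paper also splits the rooted count into a truncated part $\bar X_n\le(\Delta-1)^{h-1}$ and a remainder bounded by the tail inequality summed over the $h$ vertices of $T$; that sum is the factor $h\cdot 2$, and your rerooting argument is exactly what this factor hides. The paper finishes with $\limsup$ plus Fatou rather than uniform integrability, which is equivalent. There is one small slip. On $\{X_n>\Delta^{h-1}\}$ the bounded part $\bar X_n$ still contributes, so your displayed bound is not literally correct. Use the threshold $2\Delta^{h-1}$ instead: on that event $X_n\le 2(X_n-\bar X_n)$, so $\E[X_n\ind\{X_n>2\Delta^{h-1}\}]\le 4h\,\E[D_n^{h-1}\ind\{D_n\ge\Delta\}]$.

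\paragraph{The gap in (iii)$\Rightarrow$(i).} Your plan needs exact convergence of $\E[(D_n)_{h-1}]$. To correct for reciprocal arcs you appeal to digon counts as ``lower-order patterns inside (iii)''. That is not available. Hypothesis (iii) is assumed only for connected digraphs on exactly $h$ vertices. For $h\ge 3$ the correction terms are stars in which two leaves reach the centre through the same digon neighbour, and these are patterns on fewer than $h$ vertices. Their counts cannot in general be recovered from $h$-vertex counts, so this step fails as stated.

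\paragraph{How to repair it.} Exact convergence is not needed. Let $Y_n$ be your weighted sum of mixed-star embedding counts, i.e.\ the number of ordered $(h-1)$-tuples of arcs at $v_n^\ast$ with pairwise distinct far endpoints.
\begin{enumerate}
\item By (iii), $\E Y_n\to\E Y_\ast\le\E[D_\ast^{h-1}]<\infty$.
\item By local weak convergence, $Y_n\Rightarrow Y_\ast$.
\item Together these give uniform integrability of $\{Y_n\}$.
\item Let $u_n$ be the number of distinct neighbours of $v_n^\ast$. Then $Y_n\ge (u_n)_{h-1}$ and $D_n\le 2u_n$.
\item Hence $D_n^{h-1}\le C_h(1+Y_n)$, which yields (ii).
\end{enumerate}

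\paragraph{The paper's shorter route.} The paper uses only the two pure stars. This gives uniform integrability of $\din(v_n^\ast)^{h-1}$ and $\dout(v_n^\ast)^{h-1}$ separately. Strictly this goes via their falling factorials, since (iii) concerns $\mathrm{emb}$ rather than $\hom$; your proposal is more careful on this point than the paper's sketch. It then concludes from $D^{h-1}\le 2^{h-1}\bigl(\din^{h-1}+\dout^{h-1}\bigr)$, so reciprocal arcs never enter.
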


\begin{proof}[Proof sketch]
The equivalence (i)$\Leftrightarrow$(ii) follows from
$D_n\Rightarrow D_\ast$ under local weak convergence and the
standard characterisation of uniform integrability via convergence
of expectations.

For (ii)$\Rightarrow$(iii), the argument follows
Kurauskas~\cite[Theorem~2.1]{Kurau22} (see also
Janson--Kurauskas~\cite[Theorem~7]{JK2023} for a streamlined version
that does not require local weak convergence for the upper bound).
Fix a connected digraph $H$ on $h$ vertices, choose a spanning tree
$T$ of the underlying undirected graph of $H$ with inherited arc
orientations, and root $T$ at the root of $H^\bullet$.

The key domination step uses
Theorem~\ref{thm:directed-tail}: for any positive integer $\Delta$,
\[
\E[X_n]\ \le\ \E[\bar X_n]+h\cdot 2\,\E\big[D_n^{h-1}\ind{D_n\ge\Delta}\big],
\]
where $X_n:=\emb^\bullet(H^\bullet,G_n,v_n^\ast)$ and
$\bar X_n$ is the truncated count restricting all internal vertices
to have total degree~$<\Delta$.
Since $0\le \bar X_n\le (\Delta-1)^{h-1}$ is bounded, and
$X_n\xrightarrow{d}X_\ast:=\emb^\bullet(H^\bullet,G^\ast,r^\ast)$ by
local weak convergence, we obtain
\[
\limsup_{n\to\infty}\E[X_n]
\le \E[X_\ast]+h\cdot 2\,\limsup_{n\to\infty}
\E\big[D_n^{h-1}\ind{D_n\ge\Delta}\big].
\]
Letting $\Delta\to\infty$, the tail term vanishes by~(ii),
giving $\limsup\E[X_n]\le \E[X_\ast]$.
The reverse inequality $\E[X_\ast]\le \liminf \E[X_n]$ follows
from Fatou's lemma. Hence $\E[X_n]\to\E[X_\ast]$, which yields
\eqref{eq:dir-subgraph-conv-new}.

For (iii)$\Rightarrow$(i), apply~\eqref{eq:dir-subgraph-conv-new}
to the pure in-star $S_{h-1,0}$ and pure out-star $S_{0,h-1}$,
noting that $$n_1(n)^{-1}\hom(S_{h-1,0},G_n)=\E[\din(v_n^\ast)^{h-1}]$$
and $$\din^{h-1}+\dout^{h-1}\le D^{h-1}\le 2^{h-1}(\din^{h-1}+\dout^{h-1}).$$
\end{proof}

\subsection{Further probabilistic applications}
\label{subsec:probabilistic-applications}

The directed tree inequality of Theorem~\ref{thm:directed-tail} admits a natural probabilistic interpretation.
In locally tree-like directed networks, homomorphism counts of finite
directed trees correspond to expected pattern counts in branching-type
exploration processes.
We briefly record two consequences.

\subsubsection*{Directed local weak limits and branching processes}

Let $(G_n,v_n^\ast)\Rightarrow G^\ast$ in the directed local weak sense
as in Theorem~\ref{thm:directed-kurauskas}.
Write $r^\ast$ for the root of $G^\ast$ and
$D_\ast:=d_{G^\ast}(r^\ast)$ for the total degree at the root.

\begin{corollary}[Moment bound for directed tree counts]
\label{prop:branching-moment}
If\/ $\E[D_\ast^{h-1}]<\infty$, then for every directed tree $T$
on $h$ vertices,
\[
\E\big[\emb^\bullet(T,G^\ast,r^\ast)\big]
\;\le\;
2\,\E\big[D_\ast^{h-1}\big].
\]
In particular, finiteness of the $(h{-}1)$-th total-degree moment
is sufficient for finiteness of the expected number of rooted
directed tree embeddings of size $h$ in the local weak limit.
\end{corollary}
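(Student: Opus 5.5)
The plan is to obtain the bound as a limit version of the finite-graph inequalities already proved, with the ``in particular'' bound of Theorem~\ref{thm:directed-tail} (constant $2$, $\boldsymbol\alpha=0$, $\Delta=0$) playing the role of Sidorenko's inequality. Fix a directed tree $T$ on $h$ vertices, rooted at some vertex $o$. Since embeddings are homomorphisms, $\emb^\bullet(T,G_n,v)\le \hom((T,o),(G_n,v))$ for every $v$. Averaging over a uniform root $v_n^\ast$ then gives
\[
\E\big[\emb^\bullet(T,G_n,v_n^\ast)\big]\le \frac{\hom(T,G_n)}{|V(G_n)|}\le \frac{2}{|V(G_n)|}\sum_{v} d(v)^{h-1}=2\,\E\big[D_n^{h-1}\big].
\]
Here I use that $\hom(T,G_n)=\sum_v \hom((T,o),(G_n,v))$.

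Next, pass to the limit on the left. The quantity $\emb^\bullet(T,G,r)$ depends only on the rooted ball of radius $h-1$ around $r$. It is therefore a continuous functional on $\mathcal G^\to_\ast$ and finite for locally finite graphs. Local weak convergence then gives $\emb^\bullet(T,G_n,v_n^\ast)\Rightarrow \emb^\bullet(T,G^\ast,r^\ast)$, and Fatou's lemma yields $\E[\emb^\bullet(T,G^\ast,r^\ast)]\le \liminf_n 2\E[D_n^{h-1}]$.

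The main obstacle is the right-hand side. Only $\E[D_\ast^{h-1}]<\infty$ is assumed, and $\E[D_n^{h-1}]$ may exceed it in the limit, because degree mass can escape to infinity. To get around this, I would avoid the finite sequence and argue directly with truncation, as in the proof of Theorem~\ref{thm:directed-kurauskas}. Fix $\Delta$ and let $\bar X$ count embeddings in which every vertex of the image has total degree $<\Delta$. Then $\bar X_n\Rightarrow\bar X_\ast$ with $\bar X_n$ bounded, so $\E\bar X_\ast=\lim\E\bar X_n$.

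For the truncated sums on the finite side, I would then apply the pointwise identity with degrees capped at $\Delta$. Concretely, I would bound $\hom$ in the subgraph of $G_n$ induced on vertices of degree $<\Delta$; its degrees are at most $\min(d,\Delta)$. This gives
\[
\E\bar X_n\le 2\,\E\big[\min(D_n,\Delta)^{h-1}\big]\to 2\,\E\big[\min(D_\ast,\Delta)^{h-1}\big],
\]
where the convergence holds by bounded convergence. Letting $\Delta\to\infty$ and using monotone convergence on both sides ($\bar X_\ast\uparrow X_\ast$, since $G^\ast$ is locally finite) yields $\E[X_\ast]\le 2\E[D_\ast^{h-1}]$. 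I expect verifying that the truncated count is really a homomorphism count in a host with capped degrees to be the delicate point. The intended check is that induced-subgraph degrees are at most $\min(d,\Delta-1)$.

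The ``in particular'' statement then follows immediately from the displayed inequality.
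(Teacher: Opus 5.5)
Your proof is correct. It differs from the paper's exactly at the limit passage, and there it is the more careful of the two.

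The paper's argument has three steps. It asserts $\E[\emb^\bullet(T,G^\ast,r^\ast)]=\lim_n\E[\emb^\bullet(T,G_n,v_n^\ast)]$ ``by definition of local weak convergence''. It then bounds the finite-$n$ expectation by $2\E[D_n^{h-1}]$ via Theorem~\ref{thm:directed-tail} with $\Delta=0$, and finally passes to the limit on the right. Both limit steps tacitly need uniform integrability of $D_n^{h-1}$, i.e.\ condition (ii) of Theorem~\ref{thm:directed-kurauskas}, which is not assumed. For example, take $h\ge3$ and the disjoint union of a directed $n$-cycle and an out-star with $\lfloor n^{1/(h-1)}\rfloor$ leaves. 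This converges locally to the bi-infinite directed path, yet $\E[D_n^{h-1}]\to 2^{h-1}+1>\E[D_\ast^{h-1}]$. Moreover, for $T$ the out-star rooted at its centre, $\E[\emb^\bullet(T,G_n,v_n^\ast)]\to 1$ while the limit expectation is $0$.

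Your device repairs this. Applying the finite inequality in the subgraph induced on vertices of degree $<\Delta$ turns both sides into bounded functionals of a finite ball. Radius $h$ suffices, since $\bar X$ reads degrees at distance up to $h-1$. So plain weak convergence is enough, and monotone convergence in $\Delta$, via local finiteness of $G^\ast$, finishes the argument. It also shows the inequality needs no moment hypothesis at all: $\E[D_\ast^{h-1}]<\infty$ serves only the ``in particular''. The paper's route buys brevity and is valid in the uniformly integrable regime of Theorem~\ref{thm:directed-kurauskas}.

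One further simplification is available. The only finite-graph input you use, $\hom(T,H)\le 2\sum_v d(v)^{h-1}$, follows directly from Theorem~\ref{thm:main-directed-sidorenko}, even with constant $1$, since $\din,\dout\le d$. So you do not need the tail theorem at all, and your argument in fact gives $\E[\emb^\bullet(T,G^\ast,r^\ast)]\le \E[D_\ast^{h-1}]$.
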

\begin{proof}
By definition of local weak convergence,
$\E[\emb^\bullet(T,G^\ast,r^\ast)]
=\lim_{n\to\infty}\E[\emb^\bullet(T,G_n,v_n^\ast)]$.
Since $\emb^\bullet(T,G_n,v_n^\ast)\le\hom((T,o),(G_n,v_n^\ast))$,
Theorem~\ref{thm:directed-tail} (with $\Delta=0$) yields
$\E[\emb^\bullet(T,G_n,v_n^\ast)]\le 2\,\E[D_n^{h-1}]$.
Passing to the limit gives the claim.
\end{proof}

This statement can be viewed as a mixed-moment
upper bound for pattern counts in multi-type
Galton–Watson trees arising as local weak limits
of directed configuration models or preferential
attachment digraphs, see e.g.\ \cite{CooperFrieze2004,Perarnau2021,Perarnau2023}.

\subsubsection*{Exploration trees of directed random walks}

Let $G$ be a finite directed graph.
Consider the exploration tree generated by a directed
random walk started from a uniformly chosen vertex,
where at each step one selects uniformly among out-neighbours.

For a fixed directed tree $T$ on $k$ vertices,
the number of labelled occurrences of $T$
inside the exploration process is bounded by $\hom(T,G)$.
Hence Theorem~\ref{thm:main-directed-sidorenko} implies
\[
\#\{T\text{-patterns in }G\}
\;\le\;
\max\Big\{\sum_{v\in V(G)}\din(v)^{k-1},\;
\sum_{v\in V(G)}\dout(v)^{k-1}\Big\}.
\]
Consequently,
\[
\mathbb P(\text{exploration tree contains }T)
\;\le\;
\frac{1}{|V(G)|}
\max\Big\{\sum_{v}\din(v)^{k-1},\;\sum_{v}\dout(v)^{k-1}\Big\}.
\]
Thus pure in- and out-degree moments control the
frequency of oriented tree patterns in directed
exploration processes, without any spectral
assumptions on the adjacency matrix.

\subsubsection*{Probabilistic bounds based on pointwise $\ell^p$-envelope}
We illustrate a setting in which the uniform-$p$ envelope from
Corollary~\ref{cor:uniform-p-envelope} is quantitatively useful, whereas the
``$p=1$'' representation is not.

Let $(T,o)$ be the rooted directed path $o\to x\to y$ as in 
Remark~\ref{rem:envelope-vs-sidorenko}, which gives
\[
\hom((T,o),(G,v))=\sum_{v\to u}\dout(u).
\]
Consider a random directed graph model in which, conditional on $\dout(v)=d$,
the out-neighbour degrees $\{\dout(u):v\to u\}$ are i.i.d.\ with a heavy-tailed
distribution $D$ satisfying
\[
\mathbb E[D]=\infty
\qquad\text{but}\qquad
\mathbb E[D^q]<\infty
\ \text{ for some } q\in(0,1).
\]
(One may think of size-biased neighbour degrees in sparse heavy-tailed network
models.)

Then, conditional on $\dout(v)=d\ge1$,
\[
\mathbb E\big[\hom((T,o),(G,v)) \,\big|\, \dout(v)=d\big]
=
d\,\mathbb E[D]
=
\infty,
\]
so the rooted homomorphism count has infinite conditional mean.

On the other hand, for any $p>1$ with $1/p\in(0,1)$ such that $\mathbb E[D^{1/p}]<\infty$,
the uniform-$p$ envelope yields the pointwise bound
\[
\hom((T,o),(G,v))
\le
\dout(v)^{1-1/p}\Big(\sum_{v\to u}\dout(u)^p\Big)^{1/p}.
\]
Taking conditional expectations and using independence,
\[
\mathbb E\Big[\Big(\sum_{v\to u}\dout(u)^p\Big)^{1/p}\ \Big|\ \dout(v)=d\Big]
\le
d^{1/p}\,\mathbb E[D]=\infty.
\]
However, we can attain a suitable bound for the left hand side using finite conditional \emph{$r$-moments} for suitable $r\in(0,1)$:
indeed, for $r\in(0,1)$, concavity of $t\mapsto t^{r/p}$ gives
\[\begin{aligned}
\mathbb E\Big[\hom((T,o),(G,v))^r \,\Big|\, \dout(v)=d\Big]
& \le
d^{r(1-1/p)}\,
\mathbb E\Big[\Big(\sum_{v\to u}\dout(u)^p\Big)^{r/p}\ \Big|\ \dout(v)=d\Big]\\
&\le
d^{r}\,\mathbb E[D^{r}],
\end{aligned}
\]
which is finite whenever $\mathbb E[D^{r}]<\infty$.

Thus, even in regimes where $\mathbb E[\hom((T,o),(G,v))\,|\,\dout(v)]$ is infinite,
the $\ell^p$-envelope provides a convenient tool to obtain finite fractional
moments of rooted homomorphism counts under mild tail assumptions on neighbour degrees. Concrete instances of such size-biased heavy-tailed neighbour-degree laws arise, for example,
in the reciprocal age-dependent random connection model studied in our recent paper \cite{LM_DARCM}.

\subsection{Entropy maximisation and statistical models}
\label{subsec:entropy-applications}

Directed tree homomorphism counts arise naturally as sufficient
statistics in exponential random graph models (ERGMs).
In the dense scaling, ERGMs admit a variational formulation over
directed graphons, where limiting free energies are obtained via
entropy--energy maximisation; see
Chatterjee--Diaconis~\cite{ChatterjeeDiaconis2013}
for the undirected case and the directed developments in
Aristoff--Zhu~\cite{Zhu2015}
and Yin--Zhu~\cite{Zhu2016}.

Fix $h\ge2$ and consider probability measures $P$ on directed graphs
on $n$ vertices satisfying pure degree-moment constraints of the form
\[
\E_P\Big[\sum_{v} \din(v)^{h-1}\Big]=m_{\mathrm{in}},
\qquad
\E_P\Big[\sum_{v} \dout(v)^{h-1}\Big]=m_{\mathrm{out}}.
\]

\begin{corollary}[Tree statistics are moment-dominated]
\label{prop:entropy-domination}
Under the above constraints, for every directed tree $T$ on $h$ vertices,
\[
\E_P[\hom(T,G)]
\;\le\;
\max\{m_{\mathrm{in}},\,m_{\mathrm{out}}\}.
\]
In particular, any maximum-entropy model whose sufficient statistics
include the pure in- and out-degree moments of order $h-1$
automatically controls all directed tree counts of size~$h$.
\end{corollary}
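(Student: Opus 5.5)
The obvious route, applying Theorem~\ref{thm:main-directed-sidorenko} pointwise in $G$ and then taking expectations, does not work. It gives $\E_P[\hom(T,G)]\le \E_P[\max\{A(G),B(G)\}]$, where
\[
A(G):=\sum_v \din(v)^{h-1}=\hom(S_{h-1,0},G),\qquad B(G):=\sum_v \dout(v)^{h-1}=\hom(S_{0,h-1},G),
\]
and $\E\max$ may exceed $\max\{\E A,\E B\}$. So I will use a sharper, $G$-independent form of the theorem: there is an exponent $\theta_T\in[0,1]$, depending only on $T$, such that
\[
\hom(T,G)\le A(G)^{\theta_T}\,B(G)^{1-\theta_T}\qquad\text{for every finite digraph } G.
\]
Given this, the corollary follows from H\"older's inequality for the expectation with exponents $1/\theta_T$ and $1/(1-\theta_T)$, and then from $x^\theta y^{1-\theta}\le\max\{x,y\}$:
\[
\E_P[\hom(T,G)]\le \E_P\big[A^{\theta_T}B^{1-\theta_T}\big]\le (\E_P A)^{\theta_T}(\E_P B)^{1-\theta_T}=m_{\mathrm{in}}^{\theta_T}m_{\mathrm{out}}^{1-\theta_T}\le\max\{m_{\mathrm{in}},m_{\mathrm{out}}\}.
\]
If $\theta_T\in\{0,1\}$, the H\"older step is trivial.

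To prove the geometric-mean bound, I rerun the proof of Theorem~\ref{thm:main-directed-sidorenko} but never pass to a maximum. The argument is by induction on the number of non-leaf vertices of $T$ (equivalently, downward on the number of leaves at fixed arc count $h-1$).
\begin{itemize}
\item If $T$ is a star $S_{n-k,k}$ with $n=h-1$, the first inequality of Lemma~\ref{lem:holder-star} already reads $\hom(T,G)\le A^{(n-k)/n}B^{k/n}$. So I take $\theta_T=(n-k)/n$.
\item Otherwise I apply the proof of Lemma~\ref{lem:leaf-homog} only up to \eqref{eq:geom-mean}. This gives $\hom(T,G)$ bounded by a weighted geometric mean of $\hom(T_j^{\epsilon},G)$ over $j\in\{a,b\}$ and $\epsilon\in\{\mathrm{in},\mathrm{out}\}$. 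The weights $i_a/m$, $o_a/m$, $i_b/m$, $o_b/m$ are determined by $T$ and the choice of $a,b$ alone, not by $G$.
\end{itemize}
I fix the choice of $a,b$ once for each tree $T$, so that the four trees $T_j^\epsilon$ are determined by $T$. Terms with zero weight are simply dropped, which also handles $i_j=0$ or $o_j=0$.

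Each $T_j^\epsilon$ has $h-1$ arcs and strictly more leaves than $T$. By induction, $\hom(T_j^\epsilon,G)\le A^{\theta_{j,\epsilon}}B^{1-\theta_{j,\epsilon}}$ with $G$-independent exponents. Substituting these bounds, the product of powers of $A$ and $B$ combines to $A^{\theta_T}B^{1-\theta_T}$, where $\theta_T$ is the corresponding convex combination of the $\theta_{j,\epsilon}$. Hence $\theta_T\in[0,1]$ and it is independent of $G$.

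The main obstacle is to make sure that nothing in the proof of Lemma~\ref{lem:leaf-homog} depends on $H$ before the final maximum is taken. I will check that $p,q,r,s$ are built from $i_j,o_j$ only, and that the degenerate cases ($m_a=0$, $m_b=0$, or $a$, $b$ lacking pendant leaves) still give a valid convex combination; in those cases the corresponding H\"older step is the identity. Termination of the recursion is automatic, since the leaf count strictly increases and is bounded by $h-1$.
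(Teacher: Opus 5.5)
Your proof is correct, and it departs from the paper exactly where you say it must. The paper's proof is the ``obvious route'' you reject: it applies Theorem~\ref{thm:main-directed-sidorenko} pointwise in $G$ and then takes expectations. That step only controls $\E_P[\max\{A(G),B(G)\}]$, so it does not give the stated bound. For example, take $P$ uniform on the hosts $H_{m,0}$ and $H_{0,m}$ (both on $m+1$ vertices) from the proof of Lemma~\ref{lem:stars-fixedsize-incomparable}. Then $m_{\mathrm{in}}=m_{\mathrm{out}}=(m^{h-1}+m)/2$, while $\E_P[\max\{A,B\}]=m^{h-1}$, which is strictly larger once $h\ge3$ and $m\ge2$. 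So the paper's one-line argument really only justifies the bound $m_{\mathrm{in}}+m_{\mathrm{out}}$.

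Your route closes this gap. You stop the leaf reallocation at \eqref{eq:geom-mean} and at the first (H\"older) inequality of Lemma~\ref{lem:holder-star}, before any maximum is taken. Then all exponents depend only on $T$ and the fixed choice of $a,b$, so $\theta_T$ is a genuine constant and H\"older under $P$ applies. As a bonus you get the sharper bound $m_{\mathrm{in}}^{\theta_T}m_{\mathrm{out}}^{1-\theta_T}$. The cost is that Theorem~\ref{thm:main-directed-sidorenko} cannot be quoted as a black box, since its pointwise-maximum form is too weak here. You have to redo the induction, and it goes through as you describe.

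Two minor points:
\begin{enumerate}
\item The case $m_a=0$ or $m_b=0$ never arises. The vertices $a,b$ are leaves of the skeleton and hence internal in $T$, so each carries at least one pendant leaf. Only $i_j=0$ or $o_j=0$ needs the trivial-H\"older convention.
\item For $h=1$ your star exponent $(n-k)/n$ is undefined. In that case, however, $\hom(T,G)=|V(G)|=A(G)=B(G)$ trivially.
\end{enumerate}
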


\begin{proof}
By Theorem~\ref{thm:main-directed-sidorenko},
$\hom(T,G)\le\max\{\hom(S_{h-1,0},G)$ and $$\hom(S_{0,h-1},G)\}
=\max\{\sum_v\din(v)^{h-1},\,\sum_v\dout(v)^{h-1}\}$$
for every digraph~$G$. Taking expectations under~$P$ yields the claim.
\end{proof}

In dense directed ERGMs, entropy maximisers under edge and star-type constraints are known to exhibit structured (typically bipodal) graphons; see Aristoff--Zhu~\cite{Zhu2015} for outward star constraints and Yin--Zhu~\cite{Zhu2016} for reciprocity-based models. Corollary~\ref{prop:entropy-domination} shows that directed tree statistics introduce no additional independent constraints once the pure in- and out-degree moments of the
appropriate order are prescribed. Thus, as in the undirected case, tree-based statistics are redundant from an entropy-maximisation perspective, and genuinely new structural effects must arise from cycle or higher-complexity subgraph constraints.

\subsection{Matrix and walk inequalities}
\label{subsec:matrix-ineq}

Our directed tree inequalities can be stated purely in terms of
row/column sums of a nonnegative matrix. This connects our results to
a classical line of work on inequalities for matrix powers, initiated
by London~\cite{London66} and Hoffman~\cite{Hoffman67},
cf.~\cite{Sidorenko85}, and extended to nonsymmetric matrices by
Merikoski and Virtanen~\cite{MerikoskiVirtanen1991} and, more
recently, T\"aubig~\cite{Taubig13}.

Let $A\in\R_{\ge0}^{n\times n}$ be a nonnegative matrix with row sums
$r_i:=\sum_{j}A_{ij}$ and column sums $c_i:=\sum_{j}A_{ji}$.
Interpret $A$ as a weighted directed graph on $[n]$ and define
\[
\hom(T,A):=\sum_{\varphi:V(T)\to[n]}\
\prod_{(u\to v)\in E(T)} A_{\varphi(u)\varphi(v)}.
\]
For the directed path $P$ on $p+1$ vertices,
$\hom(P,A)=\mathrm{sum}(A^p):=\sum_{i,j}(A^p)_{ij}$.

For paths, Merikoski and Virtanen~\cite{MerikoskiVirtanen1991}
established the sharp bound
\[
\mathrm{sum}(A^p)
\;\le\;
\Big(\sum_{i=1}^n c_i^p\Big)^{1/2}
\Big(\sum_{i=1}^n r_i^p\Big)^{1/2}.
\]
The proof of Theorem~\ref{thm:main-directed-sidorenko} extends to
weighted digraphs (replacing degree counts by weighted row/column sums
throughout) and yields the following generalisation from paths to
arbitrary directed trees, at the cost of a weaker bound.

\begin{corollary}[Weighted directed tree inequality]
\label{cor:weighted-tree}
For every $A\in\R_{\ge0}^{n\times n}$ and every directed tree $T$ on
$k$ vertices,
\[
\hom(T,A)\ \le\
\max\Big\{\sum_{i=1}^n c_i^{k-1},\;\sum_{i=1}^n r_i^{k-1}\Big\}.
\]
\end{corollary}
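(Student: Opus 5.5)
The plan is to rerun the proof of Theorem~\ref{thm:main-directed-sidorenko} with every degree count replaced by the corresponding weighted row or column sum. In-degree becomes the column sum $c_i$ and out-degree becomes the row sum $r_i$. The only facts about $H$ used there are two local product structures, and H\"older's inequality itself holds for arbitrary nonnegative weights, so nothing else needs to change.

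The first step is the weighted star identity. In $S_{a,b}$ the centre has $a$ incoming and $b$ outgoing leaves. Fix the image $i$ of the centre and sum each leaf independently. An in-leaf contributes $\sum_j A_{ji}=c_i$ and an out-leaf contributes $\sum_j A_{ij}=r_i$. Hence $\hom(S_{a,b},A)=\sum_i c_i^{a}r_i^{b}$. The H\"older argument of Lemma~\ref{lem:holder-star} then carries over verbatim and gives $\hom(S_{a,b},A)\le\max\{\sum_i c_i^{a+b},\sum_i r_i^{a+b}\}$.

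The second step is the weighted version of the bilinear identity \eqref{eq:hom-bilinear}. Define
\[
N(u,w):=\sum_{\varphi:\varphi(a)=u,\ \varphi(b)=w}\ \prod_{(x\to y)\in E(T(a,b))}A_{\varphi(x)\varphi(y)}\ \ge 0.
\]
Summing out the pendant leaves at $a$ and $b$ independently gives
\[
\hom(T,A)=\sum_{u,w}N(u,w)\,c_u^{i_a}r_u^{o_a}\,c_w^{i_b}r_w^{o_b}.
\]
From here the three H\"older applications in the proof of Lemma~\ref{lem:leaf-homog} go through unchanged. They only use that $N\ge0$ and that the weighted marginal sums reassemble as $\hom$ of the modified trees, for example $\sum_u N_a(u)c_u^{m}=\hom(T_a^{\mathrm{in}},A)$. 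This yields the analogue of \eqref{eq:geom-mean}, and hence a tree $T^\ast$ with one more leaf and $\hom(T,A)\le\hom(T^\ast,A)$.

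The last step is to iterate the leaf reallocation exactly as in the proof of Theorem~\ref{thm:main-directed-sidorenko} until a star with $k-1$ arcs is reached, and then apply the weighted star bound. The only point needing care is the degenerate exponents in H\"older, where some $m_a$, $i_a$ or $o_a$ is $0$. There the corresponding factor is simply absent, as already implicit in the unweighted proof, so I expect no real obstacle.

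Alternatively, if $A$ has nonnegative rational entries, one can reduce to the unweighted theorem by scaling and blow-up. Replace each vertex by a block of $N$ copies and put a bipartite regular pattern of density $A_{ij}$ between blocks; general $A$ then follows by continuity. This route is messier, since the blow-up host is only a multigraph-like object and the loops from diagonal entries need handling, so the direct rerun is the plan.
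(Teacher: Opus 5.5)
Your proposal is correct and follows the paper's route: the paper justifies Corollary~\ref{cor:weighted-tree} only by remarking that the proof of Theorem~\ref{thm:main-directed-sidorenko} goes through with $\din,\dout$ replaced by the column and row sums $c_i,r_i$. You spell out exactly that rerun (weighted star identity, weighted bilinear identity with $N(u,w)\ge 0$, the same three H\"older steps, and iteration to a star), and you correctly keep $A$ fixed during the iteration, since the tree $T^\ast$ chosen at each step depends on $A$.
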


For paths, this bound is weaker than the Merikoski--Virtanen inequality
(since $\sqrt{XY}\le\max\{X,Y\}$), but it applies to all directed
trees. Whether a bound of the form
$\hom(T,A)\le(\sum c_i^{k-1})^{1/2}(\sum r_i^{k-1})^{1/2}$
holds for general directed trees remains open.

\noindent\textbf{\large Funding acknowledgement.} LL received support from the Leibniz Association within the Leibniz Junior Research Group on \textit{Probabilistic Methods for Dynamic Communication Networks} as part of the Leibniz Competition (grant no.\ J105/2020) and by the Deutsche Forschungsgemeinschaft (DFG, German Research Foundation) under Germany's Excellence Strategy - The Berlin Mathematics Research Center MATH+ (EXC-2046/1, EXC-2046/2, project ID: 390685689) through the project \emph{Information flow \& emergent behavior in complex networks}. CM's research was in parts funded by Deutsche Forschungsgemeinschaft (DFG, German Research Foundation) – SPP 2265 443916008.

\section*{References}
\renewcommand*{\bibfont}{\footnotesize}
\printbibliography[heading = none]

\appendix
\section{Witnesses for universal incomparability in the homomorphism order between 3-arc directed trees}
\label{app:witnesses-3arcs-matrices}

We write $A\parallel B$ if there exist loopless hosts $H_1,H_2$ on $n\le 4$ vertices such that
$\hom(A,H_1)>\hom(B,H_1)$ and $\hom(A,H_2)<\hom(B,H_2)$.
All witnesses below were found by exhaustive enumeration of all loopless hosts on $n\le 4$ vertices using a Python script and verified independently.

Stars (center $v_0$) on vertices $\{v_0,v_1,v_2,v_3\}$:
\[
\begin{aligned}
S_{0,3}&:\ v_0\to v_1,\ v_0\to v_2,\ v_0\to v_3 &\quad
S_{3,0}&:\ v_1\to v_0,\ v_2\to v_0,\ v_3\to v_0\\
S_{1,2}&:\ v_0\to v_1,\ v_0\to v_2,\ v_3\to v_0 &\quad
S_{2,1}&:\ v_1\to v_0,\ v_2\to v_0,\ v_0\to v_3
\end{aligned}
\]
Paths along $v_0\text{-}v_1\text{-}v_2\text{-}v_3$:
\[
\begin{aligned}
P_{+++}&:\ v_0\to v_1\to v_2\to v_3 &\quad
P_{++-}&:\ v_0\to v_1\to v_2\leftarrow v_3\\
P_{+-+}&:\ v_0\to v_1\leftarrow v_2\to v_3 &\quad
P_{-++}&:\ v_0\leftarrow v_1\to v_2\to v_3
\end{aligned}
\]
In each adjacency matrix $A_H$ below, entry $(i,j)$ equals $1$ if $i\to j$ is an arc of $H$
(vertices $0$-indexed).

\begingroup
\small
\setlength{\LTpre}{4pt}
\setlength{\LTpost}{4pt}
\setlength{\tabcolsep}{3pt}
\renewcommand{\arraystretch}{1.2}

\begin{longtable}{%
  >{\raggedright\arraybackslash}p{2.5cm}%
  >{\raggedright\arraybackslash}p{\dimexpr0.5\linewidth-1.95cm\relax}%
  >{\raggedright\arraybackslash}p{\dimexpr0.5\linewidth-1.95cm\relax}%
}
\caption{Strict incomparabilities among $3$-arc directed trees, with explicit witness hosts $H$ (loopless, $n\le 4$), given by adjacency matrices. All witnesses are computationally verified.}
\label{tab:long-witnesses-3arcs}\\
\toprule
Pair $A\parallel B$ &
\mbox{Witness $H_>$:} $\hom(A,H_>)>\hom(B,H_>)$ &
\mbox{Witness $H_<$:} $\hom(A,H_<)<\hom(B,H_<)$ \\
\midrule
\endfirsthead
\toprule
Pair $A\parallel B$ &
\mbox{Witness $H_>$:} $\hom(A,H_>)>\hom(B,H_>)$ &
\mbox{Witness $H_<$:} $\hom(A,H_<)<\hom(B,H_<)$ \\
\midrule
\endhead
\midrule\multicolumn{3}{r}{\emph{continued\ldots}}\\\midrule
\endfoot
\bottomrule
\endlastfoot

$S_{0,3}\parallel S_{3,0}$ &
$n=3$, $\smat{0&0&0\\0&0&0\\1&1&0}$; $8>2$ &
$n=3$, $\smat{0&0&0\\1&0&0\\1&0&0}$; $2<8$ \\[4pt]

$S_{0,3}\parallel S_{1,2}$ &
$n=2$, $\smat{0&0\\1&0}$; $1>0$ &
$n=4$, $\smat{0&1&1&0\\1&0&0&0\\1&0&0&0\\1&0&0&0}$; $11<14$ \\[4pt]

$S_{0,3}\parallel S_{2,1}$ &
$n=2$, $\smat{0&0\\1&0}$; $1>0$ &
$n=4$, $\smat{0&0&0&1\\0&0&0&1\\0&0&0&1\\0&1&1&0}$; $11<20$ \\[4pt]

$S_{0,3}\parallel P_{+++}$ &
$n=2$, $\smat{0&0\\1&0}$; $1>0$ &
$n=4$, $\smat{0&0&0&1\\0&0&1&1\\0&1&0&1\\0&1&1&0}$; $25<28$ \\[4pt]

$S_{0,3}\parallel P_{++-}$ &
$n=2$, $\smat{0&0\\1&0}$; $1>0$ &
$n=3$, $\smat{0&0&1\\0&0&1\\0&1&0}$; $3<4$ \\[4pt]

$S_{0,3}\parallel P_{+-+}$ &
$n=3$, $\smat{0&0&0\\0&0&0\\1&1&0}$; $8>4$ &
$n=3$, $\smat{0&0&0\\1&0&0\\1&0&0}$; $2<4$ \\[4pt]

$S_{0,3}\parallel P_{-++}$ &
$n=2$, $\smat{0&0\\1&0}$; $1>0$ &
$n=4$, $\smat{0&0&0&1\\0&0&1&1\\0&1&0&1\\0&1&1&0}$; $25<26$ \\[4pt]

\midrule

$S_{3,0}\parallel S_{1,2}$ &
$n=2$, $\smat{0&0\\1&0}$; $1>0$ &
$n=4$, $\smat{0&0&0&0\\0&0&0&1\\0&0&0&1\\1&1&1&0}$; $11<20$ \\[4pt]

$S_{3,0}\parallel S_{2,1}$ &
$n=2$, $\smat{0&0\\1&0}$; $1>0$ &
$n=4$, $\smat{0&1&1&1\\1&0&0&0\\1&0&0&0\\0&0&0&0}$; $11<14$ \\[4pt]

$S_{3,0}\parallel P_{+++}$ &
$n=2$, $\smat{0&0\\1&0}$; $1>0$ &
$n=4$, $\smat{0&0&0&0\\0&0&1&1\\0&1&0&1\\1&1&1&0}$; $25<28$ \\[4pt]

$S_{3,0}\parallel P_{++-}$ &
$n=2$, $\smat{0&0\\1&0}$; $1>0$ &
$n=4$, $\smat{0&0&0&0\\0&0&1&1\\0&1&0&1\\1&1&1&0}$; $25<26$ \\[4pt]

$S_{3,0}\parallel P_{+-+}$ &
$n=3$, $\smat{0&0&0\\1&0&0\\1&0&0}$; $8>4$ &
$n=3$, $\smat{0&0&0\\0&0&0\\1&1&0}$; $2<4$ \\[4pt]

$S_{3,0}\parallel P_{-++}$ &
$n=2$, $\smat{0&0\\1&0}$; $1>0$ &
$n=3$, $\smat{0&0&0\\0&0&1\\1&1&0}$; $3<4$ \\[4pt]

\midrule

$S_{2,1}\parallel S_{1,2}$ &
$n=3$, $\smat{0&0&1\\0&0&1\\0&1&0}$; $5>3$ &
$n=3$, $\smat{0&0&0\\0&0&1\\1&1&0}$; $3<5$ \\[4pt]

$S_{1,2}\parallel P_{+++}$ &
$n=3$, $\smat{0&0&0\\0&0&1\\1&0&0}$; $1>0$ &
$n=4$, $\smat{0&1&1&0\\1&0&0&0\\0&1&0&0\\0&1&0&0}$; $8<9$ \\[4pt]

$S_{1,2}\parallel P_{++-}$ &
$n=3$, $\smat{0&1&1\\1&0&0\\0&0&0}$; $5>3$ &
$n=3$, $\smat{0&0&0\\1&0&0\\1&1&0}$; $1<2$ \\[4pt]

$S_{1,2}\parallel P_{+-+}$ &
$n=3$, $\smat{0&0&1\\0&0&1\\1&1&0}$; $10>8$ &
$n=2$, $\smat{0&0\\1&0}$; $0<1$ \\[4pt]

$S_{1,2}\parallel P_{-++}$ &
$n=3$, $\smat{0&1&1\\1&0&0\\0&0&0}$; $5>4$ &
$n=3$, $\smat{0&1&1\\0&0&1\\0&0&0}$; $1<2$ \\[4pt]

\midrule

$S_{2,1}\parallel P_{+++}$ &
$n=3$, $\smat{0&0&0\\0&0&1\\1&0&0}$; $1>0$ &
$n=4$, $\smat{0&1&1&1\\1&0&0&0\\0&1&0&0\\0&0&0&0}$; $8<9$ \\[4pt]

$S_{2,1}\parallel P_{++-}$ &
$n=3$, $\smat{0&1&0\\1&0&0\\1&0&0}$; $5>4$ &
$n=3$, $\smat{0&0&0\\1&0&0\\1&1&0}$; $1<2$ \\[4pt]

$S_{2,1}\parallel P_{+-+}$ &
$n=3$, $\smat{0&0&1\\0&0&1\\1&1&0}$; $10>8$ &
$n=2$, $\smat{0&0\\1&0}$; $0<1$ \\[4pt]

$S_{2,1}\parallel P_{-++}$ &
$n=3$, $\smat{0&1&0\\1&0&0\\1&0&0}$; $5>3$ &
$n=3$, $\smat{0&0&0\\1&0&0\\1&1&0}$; $1<2$ \\[4pt]

\midrule

$P_{+++}\parallel P_{++-}$ &
$n=4$, $\smat{0&0&0&0\\0&0&0&1\\0&1&0&0\\1&1&1&0}$; $9>8$ &
$n=3$, $\smat{0&0&0\\0&0&1\\1&0&0}$; $0<1$ \\[4pt]

$P_{+++}\parallel P_{-++}$ &
$n=4$, $\smat{0&0&0&1\\0&0&0&1\\0&1&0&0\\1&1&0&0}$; $10>9$ &
$n=3$, $\smat{0&0&0\\0&0&1\\1&0&0}$; $0<1$ \\[4pt]

$P_{++-}\parallel P_{+-+}$ &
$n=4$, $\smat{0&0&0&1\\0&0&1&1\\0&1&0&1\\0&1&1&0}$; $32>31$ &
$n=2$, $\smat{0&0\\1&0}$; $0<1$ \\[4pt]

$P_{++-}\parallel P_{-++}$ &
$n=3$, $\smat{0&0&1\\0&0&1\\0&1&0}$; $4>3$ &
$n=3$, $\smat{0&0&0\\0&0&1\\1&1&0}$; $3<4$ \\[4pt]

$P_{+-+}\parallel P_{-++}$ &
$n=2$, $\smat{0&0\\1&0}$; $1>0$ &
$n=4$, $\smat{0&0&0&0\\0&0&1&1\\0&1&0&1\\1&1&1&0}$; $31<32$ \\[4pt]

\end{longtable}
\endgroup

It remains to show the incomparability of $P_{+++}$ and $P_{+-+}$, which requires a $5$-vertex witness. 

For a loopless host digraph $H$ with adjacency matrix $A$, we have
\[
\hom(P_{+++},H)=\sum_{a,b,c,d} A_{ab}A_{bc}A_{cd},
\qquad
\hom(P_{+-+},H)=\sum_{a,b,c,d} A_{ab}A_{cb}A_{cd}.
\]

We exhibit an explicit loopless digraph $H$ on vertex set
$\{0,1,2,3,4\}$ such that
$\hom(P_{+++},H)>\hom(P_{+-+},H)$.
Let $H$ have edge set
\[
E(H)=\{\,0\!\to\!1,\ 0\!\to\!2,\ 1\!\to\!2,\ 1\!\to\!3,\ 1\!\to\!4,\ 2\!\to\!0,\ 2\!\to\!1,\ 3\!\to\!0,\ 4\!\to\!0\,\},
\]
i.e.\ adjacency matrix
\[
A=
\begin{pmatrix}
0&1&1&0&0\\
0&0&1&1&1\\
1&1&0&0&0\\
1&0&0&0&0\\
1&0&0&0&0
\end{pmatrix}.
\]

Write $d^+(v)=\sum_x A_{vx}$ and $d^-(v)=\sum_y A_{yv}$.
A direct calculation gives
\[
\begin{split}
(d^+(0),d^+(1),d^+(2),d^+(3),d^+(4))=(2,3,2,1,1),\\
(d^-(0),d^-(1),d^-(2),d^-(3),d^-(4))=(3,2,2,1,1).
\end{split}
\]

We evaluate the difference
$\Delta(H):=\hom(P_{+-+},H)-\hom(P_{+++},H)$ using the identity
\[
\Delta(H)=\sum_{b,c} d^-(b)\,d^+(c)\,\bigl(A_{cb}-A_{bc}\bigr)
       \;=\;\sum_{x\to y\in E(H)}\Bigl(d^-(y)d^+(x)-d^-(x)d^+(y)\Bigr),
\]
which follows by summing first over the endpoints of the outer arcs in
the $4$-tuple expansions.

For the present $H$, summing the contribution
$d^-(y)d^+(x)-d^-(x)d^+(y)$ over the nine directed edges listed above yields
\[
\Delta(H)=-1.
\]
Equivalently,
\[
\hom(P_{+++},H)=37,
\qquad
\hom(P_{+-+},H)=36,
\]
so $\hom(P_{+++},H)>\hom(P_{+-+},H)$.

On the other hand, taking for instance the loopless host consisting of a
single directed edge, we have $\hom(P_{+++},H)=0$ but $\hom(P_{+-+},H)=1$,
hence $\hom(P_{+++},H)<\hom(P_{+-+},H)$ for that host.

\section{Nonsymmetric kernels, configuration products, and order equivalence}\label{sec:configorder}

Let $(\Omega,\mathcal F,\mu)$ be a probability space. A \emph{kernel} is a
measurable function $h:\Omega^2\to[0,1]$.
For a finite digraph $D$ with vertex set $V(D)=\{v_1,\dots,v_n\}$ and arc set $E(D)$ we define
the (directed) \emph{configuration product integral}
\begin{equation}\label{eq:UD_def}
U_D(h)
:=
\int_{\Omega^n}\ \prod_{(v_i,v_j)\in E(D)} h(x_i,x_j)\, \mathrm d\mu(x_1)\cdots \mathrm d\mu(x_n).
\end{equation}
This is the directed analogue of Sidorenko's functional $U_G$ for graphs, cf.\ \cite{Sidorenko_1994}.

\begin{definition}[Order via configuration products]\label{def:order}
For finite digraphs $D,F$ we write $D\ge F$ if
\begin{equation}\label{eq:order_allspaces}
U_D(h)\ge U_F(h)
\quad\text{for all standard probability spaces }(\Omega,\mathcal F,\mu)\text{ and all kernels }h:\Omega^2\to[0,1].
\end{equation}
\end{definition}

\subsection{Preliminary reductions}

We now use standard methods to show that we may restrict ourselves to finite measure spaces.

\begin{lemma}[Reductions]\label{lem:reductions}
Let $D,F$ be finite digraphs. The following are equivalent.
\begin{enumerate}[label=(\roman*)]
\item \eqref{eq:order_allspaces} holds.
\item The inequality $U_D(h)\ge U_F(h)$ holds for all finite measure spaces $(\Omega,\mathcal F,\mu)$
and all bounded kernels $h:\Omega^2\to[0,\infty)$.
\end{enumerate}
\end{lemma}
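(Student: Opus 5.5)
The plan is to prove (i)$\Rightarrow$(ii) and (ii)$\Rightarrow$(i). Both use only the fact that $U_D(h)$ is homogeneous of degree $|E(D)|$ in $h$ and of degree $|V(D)|$ in the total mass of $\mu$. So I will first normalise and then approximate.

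For (ii)$\Rightarrow$(i), nothing needs to be done beyond a remark. A standard probability space is a finite measure space, and a kernel with values in $[0,1]$ is a bounded nonnegative kernel, so the conditions in \eqref{eq:order_allspaces} form a subfamily of those in (ii).

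For (i)$\Rightarrow$(ii), I take a finite measure space $(\Omega,\mathcal F,\mu)$ with $0<\mu(\Omega)=:M<\infty$ and a bounded kernel $h$ with $0\le h\le C$. The case $M=0$ is trivial because both sides vanish. I would proceed in three steps.

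\textbf{Step 1 (scaling).} Set $\nu:=\mu/M$ and $g:=h/C$ (if $C=0$ both sides are $0$). Then $U_D^{\mu}(h)=M^{|V(D)|}C^{|E(D)|}U_D^{\nu}(g)$. For these factors to cancel between $D$ and $F$, I need $|V(D)|=|V(F)|$ and $|E(D)|=|E(F)|$. I would derive both equalities from (i) itself. Taking $h\equiv c$ constant in \eqref{eq:order_allspaces} gives $c^{|E(D)|}\ge c^{|E(F)|}$ for all $c\in[0,1]$, which forces $|E(D)|\le|E(F)|$. The equality of vertex counts and the reverse edge inequality cannot come from constant kernels on a probability space alone. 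For these I would pass to finite step kernels, i.e.\ finite weighted digraphs, and use blow-ups: take $\Omega=[N]$ with uniform measure, use hosts with isolated vertices, and compare growth rates in $N$. If this does not work, I would instead read the lemma in the intended setting where $D$ and $F$ have equal vertex and arc numbers, as in the fixed-size order $\preceq$. With equal sizes the scaling factors cancel, and it remains to treat $(\Omega,\nu)$ a probability space that may not be standard.

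\textbf{Step 2 (reduction to standard spaces).} $U_D^\nu(g)$ depends only on the joint law of $g$ on finitely many coordinates. So I would approximate $g$ in $L^1(\nu\otimes\nu)$ by simple functions $g_m=\sum_{k,l} a_{kl}\ind_{A_k\times A_l}$ with values in $[0,1]$, where the $A_k$ partition $\Omega$. This is possible because rectangles generate the product $\sigma$-algebra. Each $g_m$ is the pullback of a kernel on a finite (hence standard) probability space $([K],\{\nu(A_k)\})$, with the same value of $U$. Hence (i) gives $U_D(g_m)\ge U_F(g_m)$.

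\textbf{Step 3 (continuity).} By a telescoping bound, $|U_D(g)-U_D(g_m)|\le |E(D)|\,\|g-g_m\|_{L^1}$, since all factors lie in $[0,1]$ and each single-arc difference integrates to at most the $L^1$ distance. Letting $m\to\infty$ finishes the proof.

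The main obstacle is Step 1, namely justifying that the homogeneity degrees agree. Steps 2 and 3 are routine. If the size equality cannot be extracted from (i), I would state it explicitly as a standing assumption (fixed vertex and arc counts) and check that the later uses of the lemma, such as Proposition~\ref{thm:orders_equiv_density}, only concern such pairs.
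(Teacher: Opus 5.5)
\textbf{Gap in Step~1.} Your plan to extract $|V(D)|=|V(F)|$ and $|E(F)|\le|E(D)|$ from (i) via blow-ups on $[N]$ cannot work. On a probability space $U_D$ is a normalised density: isolated vertices contribute a factor $1$, and nothing grows with $N$. In fact (i) does not imply equal sizes, so the lemma as stated is false.

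\emph{Counterexamples.} Let $D$ be a single vertex and $F$ a single arc. Then $U_D(h)=1\ge\int_{\Omega^2}h\,\mathrm d(\mu\otimes\mu)=U_F(h)$ for every probability space and every $[0,1]$-valued kernel, so (i) holds. But on a one-point space of mass $1$ with $h\equiv2$ you get $U_D=1<2=U_F$, so (ii) fails. Similarly, take $D$ an arc plus an isolated vertex and $F$ an arc. These give equality throughout (i), while (ii) fails for $h\equiv1$ as soon as $\mu(\Omega)<1$.

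\emph{The converse direction.} (ii) itself forces equal sizes. Take a one-point space of mass $t$ and $h\equiv c$. Letting $t\to0$ and $t\to\infty$ with $c=1$, and then $c\to0$ and $c\to\infty$, forces $|V(D)|=|V(F)|$ and $|E(D)|=|E(F)|$.

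\emph{Consequence.} The correct statement is that (ii) holds if and only if (i) holds and the vertex and arc counts agree. Your fallback is therefore not optional: the size equality must be an explicit hypothesis. The paper's proof makes exactly this assumption tacitly, since its factors $\mu(\Omega)^{|V(D)|}M^{|E(D)|}$ and $\mu(\Omega)^{|V(F)|}M^{|E(F)|}$ cancel only then. As for checking downstream uses, the lemma is not invoked anywhere later (Proposition~\ref{thm:orders_equiv_density} does not rely on it).

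\textbf{Comparison with the paper.} With that hypothesis added, your argument is correct, and your Step~1 is the paper's entire proof. Your Steps~2--3 repair a second omission in the paper. The paper applies (i) directly to $(\Omega,\mu(\Omega)^{-1}\mu)$, which need not be standard, so (i) is not literally applicable there. Your route closes this gap:
\begin{enumerate}
\item approximate $g$ in $L^1(\nu\otimes\nu)$ by $[0,1]$-valued combinations of $\mathbf 1_{A_k\times A_l}$ (finite unions of rectangles form an algebra generating $\mathcal F\otimes\mathcal F$, and clipping the coefficients to $[0,1]$ does not increase the error);
\item pull back from the finite space $([K],(\nu(A_k))_k)$ as in Lemma~\ref{lem:pullback};
\item pass to the limit with the telescoping estimate of Lemma~\ref{lem:L1cont}, which is valid because the digraphs are loopless, so each arc factor is integrated against $\nu\otimes\nu$.
\end{enumerate}
If ``standard'' is meant to exclude atoms, pull the finite space back once more to $([0,1],\lambda)$ via intervals of lengths $\nu(A_k)$.

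A minor slip: ``if $C=0$ both sides are $0$'' is false when a digraph has no arcs. Just take any positive bound $C$.
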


\begin{proof} For (i)$\Rightarrow$(ii), let $(\Omega,\mathcal F,\mu)$ have
finite mass and let $h$ be bounded with $0\le h\le M<\infty$. Replace $\mu$ by the probability
measure $\mu':=\mu(\Omega)^{-1}\mu$ and $h$ by $h':=M^{-1}h\in[0,1]$. Then
\[
U_D^{(\mu)}(h)=\mu(\Omega)^{|V(D)|}M^{|E(D)|}U_D^{(\mu')}(h'),
\]
and likewise for $F$. Hence $U_D^{(\mu')}(h')\ge U_F^{(\mu')}(h')$ implies
$U_D^{(\mu)}(h)\ge U_F^{(\mu)}(h)$. Finally, (ii)$\Rightarrow$(i) is trivial by restricting
to probability spaces and kernels in $[0,1]$.
\end{proof}


If $\phi:(\Omega',\mathcal F',\mu')\to(\Omega,\mathcal F,\mu)$ is measure-preserving and
$h$ is a kernel on $\Omega$, define the pullback kernel
\[
h^\phi(x,y):=h(\phi(x),\phi(y))\qquad(x,y\in\Omega').
\]

\begin{lemma}[Pullback invariance]\label{lem:pullback}
For every digraph $D$ and every measure-preserving $\phi$ as above,
\[
U_D(h^\phi)=U_D(h).
\]
\end{lemma}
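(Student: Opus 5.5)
The proof is a change of variables applied to the product measure. Write $n:=|V(D)|$ and consider the product map
\[
\Phi:=\phi^{\otimes n}:(\Omega')^n\to\Omega^n,\qquad (x_1,\dots,x_n)\mapsto(\phi(x_1),\dots,\phi(x_n)).
\]
The plan has three steps: show that $\Phi$ is measure-preserving between the product spaces, identify the integrand of $U_D(h^\phi)$ as a composition with $\Phi$, and apply the transformation formula for pushforward measures.

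\emph{Step 1: the pushforward of the product measure.} We need $\Phi_\ast\big((\mu')^{\otimes n}\big)=\mu^{\otimes n}$. On measurable rectangles $B_1\times\cdots\times B_n$ with $B_i\in\mathcal F$ we have
\[
\Phi^{-1}(B_1\times\cdots\times B_n)=\phi^{-1}(B_1)\times\cdots\times\phi^{-1}(B_n).
\]
Its $(\mu')^{\otimes n}$-measure is $\prod_i\mu'(\phi^{-1}(B_i))=\prod_i\mu(B_i)$, because $\phi$ is measure-preserving. Rectangles form a $\pi$-system generating $\mathcal F^{\otimes n}$, and both measures are finite. The $\pi$–$\lambda$ (Dynkin) uniqueness theorem therefore gives equality on all of $\mathcal F^{\otimes n}$. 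Measurability of $\Phi$ follows from the same rectangle computation.

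\emph{Step 2: the integrand.} Define
\[
g(x_1,\dots,x_n):=\prod_{(v_i,v_j)\in E(D)}h(x_i,x_j).
\]
This $g$ is a finite product of measurable functions $(x_1,\dots,x_n)\mapsto h(x_i,x_j)$, each a composition of $h$ with a coordinate projection, so it is $\mathcal F^{\otimes n}$-measurable. It is also nonnegative and bounded by $1$. By definition of the pullback kernel, the integrand of $U_D(h^\phi)$ is exactly $g\circ\Phi$.

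\emph{Step 3: change of variables.} The abstract transformation formula $\int g\circ\Phi\,\mathrm d\nu=\int g\,\mathrm d(\Phi_\ast\nu)$ holds for nonnegative measurable $g$. It is proved by going from indicators to simple functions to monotone limits. Applying it with $\nu=(\mu')^{\otimes n}$ and using Step 1 gives
\[
U_D(h^\phi)=\int_{(\Omega')^n}g\circ\Phi\,\mathrm d(\mu')^{\otimes n}=\int_{\Omega^n}g\,\mathrm d\mu^{\otimes n}=U_D(h).
\]

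The only step needing any care is Step 1, the identification of the pushforward of the product measure. Even that is routine, since it is handled by the rectangle computation and the uniqueness theorem. No property of the orientation of $D$ is used, so the argument is identical to the undirected case.
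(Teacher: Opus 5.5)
Your proposal is correct and takes essentially the same approach as the paper. The paper's one-line change-of-variables argument, that $(\phi(X_1),\dots,\phi(X_n))\sim\mu^{\otimes n}$ when $(X_1,\dots,X_n)\sim(\mu')^{\otimes n}$, is exactly your Step 1, and your Steps 2--3 spell out the measure-theoretic details (rectangles with $\pi$--$\lambda$ uniqueness, then the transformation formula) that the paper leaves implicit.
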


\begin{proof}
This is a direct change-of-variables statement: under $\mu'^n$, the vector
$(\phi(X_1),\dots,\phi(X_n))$ has law $\mu^n$ whenever $(X_1,\dots,X_n)\sim\mu'^n$.
\end{proof}
Considering bounded kernels straightforwardly yields continuity in $L^1$ of the configuration products. 
\begin{lemma}[{$L^1$-continuity on $[0,1]$-bounded kernels}]\label{lem:L1cont}
Fix a digraph $D$ with $m:=|E(D)|$ arcs. Let $(\Omega,\mathcal F,\mu)$ be a standard probability space.
If $h,h_n:\Omega^2\to[0,1]$ and $\|h_n-h\|_{L^1(\mu\otimes\mu)}\to0$, then
$U_D(h_n)\to U_D(h)$.
\end{lemma}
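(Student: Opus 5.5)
The plan is a telescoping argument over the arcs of $D$, using that all kernels take values in $[0,1]$. Enumerate the arcs as $e_1,\dots,e_m$, with $e_\ell=(v_{i_\ell},v_{j_\ell})$. Write $x=(x_1,\dots,x_n)$ and $\mathrm d\mu^n(x)$ for the product measure on $\Omega^n$. For $\ell=0,\dots,m$ define the hybrid integrand
\[
P_\ell(x):=\prod_{r\le \ell} h_n(x_{i_r},x_{j_r})\prod_{r>\ell} h(x_{i_r},x_{j_r}).
\]
Then $U_D(h)=\int P_0\,\mathrm d\mu^n$ and $U_D(h_n)=\int P_m\,\mathrm d\mu^n$. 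Hence
\[
|U_D(h_n)-U_D(h)|\le \sum_{\ell=1}^m \int_{\Omega^n}|P_\ell-P_{\ell-1}|\,\mathrm d\mu^n.
\]

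Consecutive hybrids $P_\ell$ and $P_{\ell-1}$ differ only in the $\ell$-th factor. All other factors lie in $[0,1]$ because both $h$ and $h_n$ do. Therefore
\[
|P_\ell-P_{\ell-1}|\le |h_n(x_{i_\ell},x_{j_\ell})-h(x_{i_\ell},x_{j_\ell})|.
\]
Since $D$ is loopless, $i_\ell\ne j_\ell$. Integrating over the remaining $n-2$ coordinates, each of which carries a probability measure, leaves exactly $\|h_n-h\|_{L^1(\mu\otimes\mu)}$. Summing over the arcs gives
\[
|U_D(h_n)-U_D(h)|\le m\,\|h_n-h\|_{L^1(\mu\otimes\mu)}\to0.
\]

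Measurability of the integrands is clear, as they are finite products of measurable functions, and boundedness ensures all integrals are finite. If loops were ever admitted, the corresponding factor would become a diagonal term $h(x_i,x_i)$, and the $L^1$ norm would no longer control it. This is the only point where care is needed, and it is excluded here because digraphs are simple and loopless. The factor $m$ grows linearly in the number of arcs, but it is a fixed constant for the fixed digraph $D$, so it does not affect the convergence.
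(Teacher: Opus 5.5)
Your proof is correct and is essentially the paper's argument: a telescoping bound over the $m$ arcs, using that all factors lie in $[0,1]$, then Fubini to integrate out the remaining coordinates, which gives $|U_D(h_n)-U_D(h)|\le m\,\|h_n-h\|_{L^1(\mu\otimes\mu)}$. Your explicit remark that looplessness is what lets each arc factor integrate to the full $L^1(\mu\otimes\mu)$ norm spells out a point the paper leaves implicit.
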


\begin{proof}
Write $E(D)=\{e_1,\dots,e_m\}$ with $e_r=(a_r,b_r)$. For $\mathbf x=(x_1,\dots,x_n)\in\Omega^n$
set
\[
P(\mathbf x):=\prod_{r=1}^m h(x_{a_r},x_{b_r}),\qquad
P_n(\mathbf x):=\prod_{r=1}^m h_n(x_{a_r},x_{b_r}).
\]
Since all factors lie in $[0,1]$, the telescoping bound gives
\[
|P_n(\mathbf x)-P(\mathbf x)|
\le \sum_{r=1}^m |h_n(x_{a_r},x_{b_r})-h(x_{a_r},x_{b_r})|.
\]
Integrating over $\mu^n$ and using Fubini,
\[
|U_D(h_n)-U_D(h)|
\le \sum_{r=1}^m \int_{\Omega^2}|h_n(x,y)-h(x,y)|\,\mathrm d\mu(x)\mathrm d\mu(y)
= m\,\|h_n-h\|_1\to0.
\]
\end{proof}

\begin{remark}
If one insists on quantifying over \emph{all} probability spaces (not necessarily standard),
one needs a separate measure-theoretic approximation argument. In this paper we work on
standard spaces, which is the natural framework for directed graphons and suffices for the
order theory and all applications.
\end{remark}

\subsection{Equivalence of orders}
For a finite digraph $H$ and a finite digraph $D$ define the homomorphism density
\[
t(D,H):=\frac{\hom(D,H)}{|V(H)|^{|V(D)|}}.
\]
We say that $D$ \emph{dominates} $F$ in the combinatorial sense if $t(D,H)\ge t(F,H)$ for all
finite digraphs $H$. The following reduction to the homomorphism count order of Section~\ref{sec:directedtrees} is standard.

\begin{proposition}[Equivalence of homomorphism and density order]
\label{thm:equiv_orders_standard}
Let $G,L$ be finite simple directed graphs without loops or multiple edges.
The following are equivalent:

\begin{enumerate}
\item For every finite directed graph $H$,
\[
\hom(G,H) \ge \hom(L,H).
\]
\item For every directed graphon $W$,
\[
t(G,W) \ge t(L,W).
\]
\end{enumerate}
\end{proposition}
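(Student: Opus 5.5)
The plan is to prove the two implications separately. For a finite digraph $H$ on $N$ vertices, $W_H(x,y)$ denotes the step-function graphon: partition $[0,1]$ into $N$ intervals of length $1/N$ and put $W_H=1$ on $I_a\times I_b$ exactly when $a\to b$ is an arc of $H$. For (2)$\Rightarrow$(1) we note that $t(D,W_H)=\hom(D,H)/N^{|V(D)|}$ for every finite digraph $D$. The statement compares \emph{counts}, not densities, so we cannot plug in $W_H$ directly when $|V(G)|\neq|V(L)|$. We therefore first check what the orders mean in the regime of interest. If $|V(G)|=|V(L)|$, then $t(G,W_H)\ge t(L,W_H)$ multiplies through by $N^{|V(G)|}$ and gives $\hom(G,H)\ge\hom(L,H)$ at once. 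If $|V(G)|\neq |V(L)|$, both conditions need separate treatment. Taking $H$ to be a single vertex, or a large edgeless digraph, when $G,L$ have arcs, we compare $N^{|V(G)|}$ against $N^{|V(L)|}$ and against $0$. This pins down the vertex counts and shows which side is forced. We will handle this bookkeeping first, since the density order is only informative in the natural setting, and the paper applies it to fixed-size trees.

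For (1)$\Rightarrow$(2) we go through approximation. By Lemma~\ref{lem:pullback} and Lemma~\ref{lem:L1cont} it suffices to prove $t(G,W)\ge t(L,W)$ for $W$ in an $L^1$-dense class. A first reduction would be to step kernels with rational values on equal-measure steps, but these are not yet $0/1$ digraph adjacency patterns. The standard route is the $W$-random digraph $H_n=\mathbb G(n,W)$: sample $x_1,\dots,x_n$ i.i.d.\ uniform, and include each arc $i\to j$ ($i\neq j$) independently with probability $W(x_i,x_j)$. Then $\E[\hom(D,H_n)]/n^{|V(D)|}\to t(D,W)$ as $n\to\infty$. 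The error comes only from non-injective maps, which number $O(n^{|V(D)|-1})$, and from loops, which are excluded. For injective maps, arcs are independent given the $x_i$, since $D$ is simple, so the expectation is exactly $t(D,W)$ up to that error. Applying (1) to each realisation $H_n$, dividing by $n^{|V(G)|}=n^{|V(L)|}$, taking expectations and letting $n\to\infty$ then yields (2).

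The main obstacle is the normalisation mismatch between counts and densities when $|V(G)|\neq|V(L)|$. There the two statements are not literally equivalent without restricting to equal orders, or to a statement that is vacuous or forced. I will first try to show that the hypotheses force $|V(G)|=|V(L)|$, or else restrict explicitly to that case, which is the case used for trees of fixed size. A secondary technical point is handling non-simple hosts. This is why we use $\mathbb G(n,W)$, which produces simple loopless digraphs, rather than rounding $W$ directly.
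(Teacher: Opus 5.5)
You are right that the count/density normalisation is the crux; the paper's two-line proof passes between $t(G,W_H)\ge t(L,W_H)$ and $\hom(G,H)\ge\hom(L,H)$ without comment, which tacitly assumes $|V(G)|=|V(L)|$. But the first branch of your plan, showing that the hypotheses force $|V(G)|=|V(L)|$, cannot succeed. Without that assumption the proposition is false in both directions.

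\begin{itemize}
\item For $G=K_1$ and $L=\vec K_2$ (a single arc), (2) holds since $t(K_1,W)=1\ge\int_{[0,1]^2}W=t(\vec K_2,W)$. But (1) fails on any host with more arcs than vertices, e.g.\ a tournament on four vertices, where $4<6$.
\item For $G=\vec K_2\sqcup\vec K_2$ and $L=\vec K_2$, (1) holds since $\hom(G,H)=|E(H)|^2\ge|E(H)|$. But (2) fails for $W\equiv 1/2$, where $1/4<1/2$.
\end{itemize}

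Your proposed test hosts carry no information here. A loopless single vertex or an edgeless host gives $0\ge 0$ as soon as $G$ and $L$ both have arcs. When both are arcless, (1) yields only $|V(G)|\ge|V(L)|$ while (2) reads $1\ge 1$; so $G=K_1$, $L=\overline{K_2}$ is another counterexample. No bookkeeping rescues the statement as written. The only viable route is your second branch: add $|V(G)|=|V(L)|$ as a hypothesis. This is harmless for the paper's use, where trees of a fixed order are compared.

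Under that hypothesis your argument is complete. Direction (2)$\Rightarrow$(1) is exactly the paper's $W_H$ argument.

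For (1)$\Rightarrow$(2) you take a different route from the paper. The paper approximates $W$ in cut distance by step graphons of finite digraphs and invokes continuity of $t(G,\cdot)$ in the cut metric. You need only the first-moment identity $\E\hom(D,\mathbb{G}(n,W))=n(n-1)\cdots(n-v+1)\,t(D,W)+O(n^{v-1})$ with $v=|V(D)|$. Since (1) holds for every realisation of $\mathbb{G}(n,W)$, averaging suffices, and neither concentration nor the counting lemma is needed.

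Your route is more self-contained; the cut-metric density statement the paper cites is itself typically proved via $\mathbb{G}(n,W)$ plus concentration. The paper's route is shorter if standard graph-limit theory is taken for granted. Your preliminary appeal to Lemma~\ref{lem:pullback} and Lemma~\ref{lem:L1cont} plays no role in your final argument and can be dropped.
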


\begin{proof}
(1) $\Rightarrow$ (2):
Every graphon can be approximated in cut metric by step graphons arising from
finite directed graphs. Since $t(G,\cdot)$ is continuous in the cut metric,
the inequality extends to all graphons.

(2) $\Rightarrow$ (1):
Every finite directed graph $H$ induces a step graphon $W_H$ such that
\[
t(G,W_H)=\frac{\hom(G,H)}{|V(H)|^{|V(G)|}}.
\]
Thus the density inequality implies the homomorphism inequality.
\end{proof}

\begin{proposition}[Equivalence of analytic and combinatorial orders]\label{thm:orders_equiv_density}
Let $D,F$ be finite digraphs. The following are equivalent.
\begin{enumerate}[label=(\roman*)]
\item (\emph{Analytic order}) $U_D(h)\ge U_F(h)$ for all standard probability spaces
$(\Omega,\mathcal F,\mu)$ and all kernels $h:\Omega^2\to[0,1]$.
\item (\emph{Combinatorial order}) $t(D,H)\ge t(F,H)$ for all finite digraphs $H$.
\end{enumerate}
\end{proposition}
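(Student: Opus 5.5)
The plan is to prove the two implications separately, using Lemma~\ref{lem:reductions}, Lemma~\ref{lem:pullback} and Lemma~\ref{lem:L1cont} as the main tools.

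\emph{(i)$\Rightarrow$(ii).} Given a finite digraph $H$ on $N$ vertices, take $\Omega=V(H)$ with the uniform probability measure and the kernel $h_H(x,y):=\ind\{x\to y\in E(H)\}\in\{0,1\}$. Expanding \eqref{eq:UD_def} as a finite sum over maps $V(D)\to V(H)$ gives
\[
U_D(h_H)=N^{-|V(D)|}\sum_{\varphi:V(D)\to V(H)}\ \prod_{(v_i,v_j)\in E(D)}\ind\{\varphi(v_i)\to\varphi(v_j)\}=t(D,H),
\]
and the same holds for $F$. A finite set with the uniform measure is a standard probability space, so (i) applied to $h_H$ yields (ii) immediately.

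\emph{(ii)$\Rightarrow$(i).} The argument runs in three approximation steps.

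First, by Lemma~\ref{lem:pullback} we may reduce to $\Omega=[0,1]$ with Lebesgue measure. Every standard probability space admits a measure-preserving map from $[0,1]$ (atoms correspond to intervals), so pulling $h$ back does not change $U_D$ or $U_F$.

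Second, we approximate $h$ in $L^1([0,1]^2)$ by step kernels $h_n$ that are constant on the blocks $I_a\times I_b$ of a partition of $[0,1]$ into $n$ equal intervals. Conditional expectation onto the dyadic $\sigma$-algebra does this by the martingale convergence theorem, and it keeps values in $[0,1]$. Lemma~\ref{lem:L1cont} then gives $U_D(h_n)\to U_D(h)$ and $U_F(h_n)\to U_F(h)$, so it suffices to prove $U_D(g)\ge U_F(g)$ for step kernels $g$ with values $g_{ab}\in[0,1]$ on equal blocks.

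Third, we approximate such a step kernel by kernels of finite digraphs. This is the main obstacle, for two reasons. The values $g_{ab}$ are fractional rather than $0/1$, and the diagonal blocks $g_{aa}$ would naively require loops, whereas hosts are loopless. The plan is a blow-up construction. Pick a large integer $M$ and, for each $a$, a class $C_a$ of $M$ vertices. Build a random loopless digraph $H_M$ on $\bigsqcup_a C_a$ by placing each arc $x\to y$ with $x\in C_a$, $y\in C_b$, $x\ne y$, independently with probability $g_{ab}$. Then the kernel $h_{H_M}$, pulled back to $[0,1]$ via the measure-preserving map sending $I_a$ to $C_a$ uniformly, satisfies $\|h_{H_M}-g\|_\square\to0$ almost surely. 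The excluded diagonal $x=y$ carries only mass $O(1/(nM))$, so it does not affect this limit.

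Since $L^1$-continuity alone does not handle this random rounding, we would instead use the standard counting lemma: for arc-labelled products with factors in $[0,1]$, the telescoping argument of Lemma~\ref{lem:L1cont} works with the cut norm in place of the $L^1$ norm. This gives $|U_D(h_1)-U_D(h_2)|\le|E(D)|\,\|h_1-h_2\|_\square$, since each single-arc replacement is a bilinear form in functions of the two endpoints. Alternatively, one can avoid randomness and cut norms by computing expectations: $\E\,t(D,H_M)=U_D(g)+O(1/M)$, because only maps that collapse two vertices of $D$ into the same host vertex see dependence between arcs. Hence
\[
U_D(g)-U_F(g)=\lim_{M\to\infty}\E\bigl[t(D,H_M)-t(F,H_M)\bigr]\ge0
\]
by (ii). This expectation route is the one I would write out, since it needs only linearity of expectation and a count of the $O(M^{|V(D)|-1})$ non-injective maps.

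Combining the three steps with Lemma~\ref{lem:reductions} completes (ii)$\Rightarrow$(i).
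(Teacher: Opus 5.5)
Your proof is correct. For (ii)$\Rightarrow$(i) it takes a genuinely different route from the paper. Your (i)$\Rightarrow$(ii) is the paper's argument with the uniform space $V(H)$ in place of an equipartition of $[0,1]$, and the two are equivalent.

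For (ii)$\Rightarrow$(i), the paper samples the random digraph $G(n,h)$ directly from the kernel via i.i.d.\ uniform labels and asserts $\E[t(Q,G_n)]=U_Q(h)$. It then proves concentration of $t(Q,G_n)$ by Azuma--Hoeffding and argues by contradiction, producing one realisation $H$ with $t(D,H)<t(F,H)$. You instead first reduce to equal-block step kernels, using martingale convergence and Lemma~\ref{lem:L1cont}. After that you use only linearity of expectation: (ii) holds for \emph{every} realisation of $H_M$, so the inequality passes to expectations, and $\E\,t(Q,H_M)=U_Q(g)+O(1/M)$ finishes.

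What your route buys is a more elementary and more robust argument. No concentration is needed, and you handle explicitly the $O(1/M)$ error coming from non-injective maps and from excluding loops. The paper's identity $\E[t(Q,G_n)]=U_Q(h)$ is in fact only true up to an $O(1/n)$ error for exactly that reason. Its Doob martingale also exposes only the labels $X_i$, so on its own it does not control the randomness of the arcs given the labels. The extra information the paper's route would give, namely that a typical realisation is a witness, is not needed for the statement. Your Step~2 is in fact dispensable: the same expectation computation applies verbatim to $G(n,h)$ with random labels, giving $\E[t(Q,G(n,h))]=U_Q(h)+O(1/n)$ for an arbitrary kernel.

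One caution about the alternative you mention but do not use. The cut-norm counting bound, $|U_D(h_1)-U_D(h_2)|\le|E(D)|$ times the cut norm of $h_1-h_2$, is false for digraphs containing a pair of antiparallel arcs, and such digraphs are allowed here. If $D$ contains both $u\to v$ and $v\to u$, the remaining factor $h(x_v,x_u)$ couples $x_u$ and $x_v$. Replacing $h(x_u,x_v)$ is then not a bilinear form in functions of one variable each. Concretely, kernels of random tournaments converge in cut norm to the constant $1/2$, yet their $2$-cycle density is $0$, while that of the constant kernel $1/2$ is $1/4$. Your independent-arc blow-up does produce the correct $2$-cycle densities, but this follows from the expectation computation, not from cut-norm convergence. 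So the expectation route you chose is the right one.
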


\begin{proof}
\emph{(i)$\Rightarrow$(ii).}
Fix a finite digraph $H$ with vertex set $[N]$.
Let $\Omega=[0,1]$ with Lebesgue measure and partition $\Omega$ into intervals
$I_1,\dots,I_N$ of equal length $1/N$.
Define the step kernel
\[
h_H(x,y):=\ind\{\{(i,j)\in E(H)\}\}\qquad\text{whenever } x\in I_i,\ y\in I_j.
\]
Expanding \eqref{eq:UD_def} and using that each $x_r$ falls into each interval with
probability $1/N$, we obtain
\[
U_D(h_H)=\frac{\hom(D,H)}{N^{|V(D)|}}=t(D,H),\qquad
U_F(h_H)=t(F,H).
\]
Hence $U_D(h_H)\ge U_F(h_H)$ implies $t(D,H)\ge t(F,H)$.

\smallskip
\emph{(ii)$\Rightarrow$(i).}
By Lemma~\ref{lem:pullback} and the structure theorem for
standard probability spaces, it suffices to prove (i) for kernels
$h:[0,1]^2\to[0,1]$ on $([0,1],\lambda)$.
Assume for contradiction that there exists such an $h$ and some $\varepsilon>0$ such that
\begin{equation}\label{eq:contrad_eps_density}
U_D(h)\le U_F(h)-3\varepsilon.
\end{equation}

For $n\in\N$ let $G_n=G(n,h)$ be the inhomogeneous random \emph{loopless} digraph on vertex set $[n]$
obtained as follows: sample i.i.d.\ $X_1,\dots,X_n\sim\mathrm{Unif}[0,1]$ and, conditional on the $X_i$,
insert each arc $(i,j)$ with $i\neq j$ independently with probability $h(X_i,X_j)$.

Since $Q$ is loopless, each factor corresponds to an arc $(i,j)$ with $i\neq j$, so the construction above matches \eqref{eq:UD_def} exactly. For every fixed digraph $Q$ we have
\begin{equation}\label{eq:EGn}
\E[t(Q,G_n)]=U_Q(h).
\end{equation}
Moreover, for each fixed $Q$ we claim the concentration
\begin{equation}\label{eq:concGn}
t(Q,G_n)\xrightarrow[n\to\infty]{\P}U_Q(h).
\end{equation}
To prove \eqref{eq:concGn}, expose the labels $X_1,\dots,X_n$ sequentially and consider the Doob
martingale $M_i:=\E[t(Q,G_n)\mid X_1,\dots,X_i]$.
Changing one label $X_i$ can only affect those homomorphism maps $\varphi:V(Q)\to[n]$ for which
$\varphi(v)=i$ for at least one $v\in V(Q)$.
There are at most $|V(Q)|\,n^{|V(Q)|-1}$ such maps, and each contributes at most $1/n^{|V(Q)|}$
to $t(Q,G_n)$.
Hence the martingale differences satisfy $|M_i-M_{i-1}|\le |V(Q)|/n$ almost surely, and
Azuma--Hoeffding yields
\[
\P\big(|t(Q,G_n)-\E t(Q,G_n)|>\varepsilon\big)\le 2\exp\!\Big(-\frac{\varepsilon^2 n}{2|V(Q)|^2}\Big),
\]
which implies \eqref{eq:concGn} together with \eqref{eq:EGn}.

Applying \eqref{eq:concGn} to $Q=D$ and $Q=F$, for all sufficiently large $n$ we have with positive
probability simultaneously
\[
t(D,G_n)\le U_D(h)+\varepsilon,\qquad
t(F,G_n)\ge U_F(h)-\varepsilon.
\]
Combined with \eqref{eq:contrad_eps_density} this implies $t(D,G_n)<t(F,G_n)$ with positive
probability. Hence there exists a deterministic finite digraph $H$ (a realisation of $G_n$)
such that $t(D,H)<t(F,H)$, contradicting (ii).
\end{proof}

\end{document}